\title{
Note on Minimization of 
Quasi \Mnat-convex Functions%
\footnote{
This work was supported by 
JSPS KAKENHI Grant Numbers JP23K11001 and JP23K10995.
}
}
\author{Kazuo Murota%
\footnote{
The Institute of Statistical Mathematics,
Tokyo 190-8562, Japan; 
and
Faculty of Economics and Business Administration,
Tokyo Metropolitan University, 
Tokyo 192-0397, Japan,
\texttt{murota@tmu.ac.jp}
}
 \and
Akiyoshi Shioura%
\footnote{
Department of Industrial Engineering and Economics,
Tokyo Institute of Technology,
Tokyo 152-8550, Japan
  \texttt{shioura.a.aa@m.titech.ac.jp}}
}
\date{May 29, 2023 / June 18, 2023 / September 5, 2023 /November 27, 2023}
\newcommand{\suppp}{{\rm supp}\sp{+}}
\newcommand{\suppm}{{\rm supp}\sp{-}}
\newcommand{\RR}{\mathbb{R}}
\newcommand{\Z}{\mathbb{Z}}
\newcommand{\dom}{{\rm dom\,}}
\newcommand{\todaye}{\the\year/\the\month/\the\day}
\newcommand{\Rinf}{\RR \cup \{ +\infty \}}
\newcommand{\Mnat}{{M$^{\natural}$}}
\newcommand{\SSQM}{\mbox{\rm (SSQM)}}
\newcommand{\SSQMb}{\mbox{\rm\bf (SSQM)}}
\newcommand{\SSQMn}{\mbox{\rm (SSQM$\sp{\natural}$)}}
\newcommand{\SSQMnb}{\mbox{\rm\bf (SSQM$\sp{\natural}$)}}
\newcommand{\Nzero}{N\cup\{0\}}
\newcommand{\norm}[1]{\lVert#1\rVert}
\newtheorem{theorem}{Theorem}[section]
\newtheorem{lemma}[theorem]{Lemma}
\newtheorem{corollary}[theorem]{Corollary}
\newtheorem{example}[theorem]{Example}
\numberwithin{equation}{section}
\begin{document}

\maketitle

\begin{abstract}
For a class of discrete quasi convex functions
called semi-strictly quasi \Mnat-convex functions,
we investigate fundamental issues
relating to minimization,
such as
optimality condition by local optimality,
minimizer cut property,
geodesic property,
and proximity property.
Emphasis is put on comparisons with (usual) \Mnat-convex functions.
The same optimality condition
and a weaker form of the minimizer cut property
hold for semi-strictly quasi \Mnat-convex functions,
while geodesic property and proximity property fail.
\end{abstract}

\newpage

\section{Introduction}
\label{sec:1}

 In this paper, we deal with a class of discrete quasi convex functions
called semi-strictly quasi \Mnat-convex functions
\cite{FS05sbst} (see also \cite{MS03quasi}).
 The concept of semi-strictly quasi \Mnat-convex function
is introduced as a ``quasi convex'' version of \Mnat-convex function
\cite{MS99gp},
which is a major concept in the theory of discrete convex analysis 
introduced as a variant of M-convex function 
 \cite{Mstein96,Mdca98,Mdcasiam}.
 Application of (semi-strictly) quasi \Mnat-convex functions
can be found in mathematical economics \cite{FS05sbst,MY15mor} 
and
operations research \cite{CM21mnat}.

 An M-convex function is defined as a function $f:\Z^n \to \Rinf$
satisfying a certain exchange axiom (see Section \ref{sec:quasiM}), 
which implies that
the effective domain $\dom f = \{x \in \Z^n \mid f(x) < + \infty\}$
is contained in a hyperplane of the form $\sum_{i=1}^n x(i)= r$
for some $r \in \Z$.
 Due to this fact, it is natural to consider 
the projection of an M-convex function
to the $(n-1)$-dimensional space along a coordinate axis, 
which is called an \Mnat-convex function.
 A nontrivial argument shows that an \Mnat-convex  function $f:\Z^n \to \Rinf$
is characterized by the following exchange axiom:
\begin{quote}
{\bf (\Mnat-EXC)}
 $\forall x, y \in \dom f$, $\forall i \in \suppp (x - y)$,
$\exists j \in \suppm(x - y)\cup\{0\}$ such that
\begin{equation}
\label{eqn:def-Mnat-ineq}
  f(x) + f(y) \geq f(x - \chi_{i} + \chi_{j}) 
 + f(y + \chi_{i} - \chi_{j}),
\end{equation}
\end{quote}
where $N = \{1,2,\ldots, n\}$, 
$\chi_i \in \{0, 1\}^n$  is the characteristic vector of $i \in N$,
$\chi_0 = 0$, and
\begin{align*}
& \suppp(x-y) = \{i \in N \mid x(i) > y(i)\},  \qquad
\suppm(x-y) = \{j \in N \mid x(j) < y(j)\}.
\end{align*}

 The inequality \eqref{eqn:def-Mnat-ineq} implies that
at least one of the following three conditions holds:
\begin{align}
& f(x - \chi_i + \chi_j)< f(x),
\label{eqn:def-SSQMnat-ineq-1}\\
& f(y + \chi_i - \chi_j) < f(y),
\label{eqn:def-SSQMnat-ineq-2}\\
& f(x - \chi_i + \chi_j) = f(x) \mbox{ and } f(y + \chi_i - \chi_j) = f(y).
\label{eqn:def-SSQMnat-ineq-3}
\end{align}
 Using this, a semi-strictly quasi \Mnat-convex function 
(s.s.\;quasi \Mnat-convex function, for short)
is defined as follows:
 $f:\Z^n \to \Rinf$ 
is called an s.s.\;quasi \Mnat-convex function
if it satisfies the following exchange axiom:
\begin{quote}
{\bf \SSQMnb}
 $\forall x, y \in \dom f$, $\forall i \in \suppp (x - y)$,
$\exists j \in \suppm(x - y)\cup\{0\}$ satisfying
at least one of the conditions
\eqref{eqn:def-SSQMnat-ineq-1},
\eqref{eqn:def-SSQMnat-ineq-2}, and \eqref{eqn:def-SSQMnat-ineq-3}.
\end{quote}

 The main aim of this paper is to investigate fundamental issues 
relating to minimization of an s.s.\;quasi \Mnat-convex function.
 It is known that minimizers of an M-convex function
have various nice properties (to be described in 
Section \ref{sec:prop-M-conv} of Appendix)
such as
\begin{quote}
 $\bullet$ optimality condition by local optimality, 
\\
 $\bullet$ minimizer cut property,
\\
 $\bullet$ geodesic property,
\\
 $\bullet$ proximity property.
\end{quote}
The definition of \Mnat-convex function implies that
these properties of M-convex functions
are inherited by \Mnat-convex functions,
as shown in Section \ref{sec:prop}.
 In this paper, we examine which of the above properties are satisfied by
s.s.\;quasi \Mnat-convex functions.
 For each of the properties, 
if it holds for s.s.\;quasi \Mnat-convex functions,
we describe the precise statement of the property in question
and give its proof;
otherwise, we provide an example to show the failure of the property.

It is added that
there is a notion called ``s.s.\;quasi M-convex function''
\cite{Mdcasiam,MS03quasi},
which corresponds directly to M-convexity.
Although M-convex and \Mnat-convex functions are known to be essentially
equivalent,
it turns out that
their quasi-convex versions, namely,
s.s.\;quasi M-convexity and s.s.\;quasi \Mnat-convexity,
are significantly different.
We also discuss such subtle points in Section \ref{sec:quasiM}.


\section{Properties on Minimization of Quasi \Mnat-convex Functions}
\label{sec:prop}

\subsection{Optimality Condition by Local Optimality}
\label{sec:loc-opt}

In this section we consider an optimality condition
for minimization in terms of local optimality
and also a minimization algorithm based on the optimality condition.
Before dealing with quasi \Mnat-convex functions,
we describe the existing results for \Mnat-convex functions.

A minimizer $x^*$ of an \Mnat-convex function $f$ can be characterized by 
the local minimality within the neighborhood
consisting of vectors $y \in \Z^n$ with  $\|y - x^*\|_1 \le 2$.

\begin{theorem}[{cf.~{\cite[Theorem 2.4]{Mstein96}, \cite[Theorem 2.2]{Msbmfl99}}}]
\label{thm:Mminimizer-Mnat}
 Let $f: \Z^n \to \Rinf$ be an \Mnat-convex function.
 A vector $x^* \in \dom f$ 
is a minimizer of $f$ if and only if
\begin{align}
\label{eqn:loc-min-global-min2}
f(x^* - \chi_i + \chi_j) \ge f(x^*) \qquad (i, j \in \Nzero).
\end{align}
\end{theorem}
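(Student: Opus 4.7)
The ``only if'' direction is immediate, so I concentrate on the nontrivial ``if'' direction, which I plan to establish by the standard minimum-distance argument for exchange-type functions. Assume, for contradiction, that $x^* \in \dom f$ satisfies \eqref{eqn:loc-min-global-min2} but there exists $y \in \dom f$ with $f(y) < f(x^*)$. Among all such $y$, I choose one minimizing $\|y - x^*\|_1$; this is well defined since the distance is a non-negative integer, and necessarily $y \ne x^*$. The goal is then to produce $y' \in \dom f$ with $f(y') < f(x^*)$ and $\|y' - x^*\|_1 < \|y - x^*\|_1$, contradicting the choice of $y$.

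I would split into two cases according to whether $\suppp(x^* - y)$ is empty. In the main case $\suppp(x^* - y) \ne \emptyset$, pick $i \in \suppp(x^* - y)$ and apply \textbf{(\Mnat-EXC)} to $x^*, y$ and this $i$, obtaining $j \in \suppm(x^* - y) \cup \{0\}$ with
\[
f(x^*) + f(y) \ge f(x^* - \chi_i + \chi_j) + f(y + \chi_i - \chi_j).
\]
The local hypothesis \eqref{eqn:loc-min-global-min2} gives $f(x^* - \chi_i + \chi_j) \ge f(x^*)$, so $f(y + \chi_i - \chi_j) \le f(y) < f(x^*)$, and I set $y' := y + \chi_i - \chi_j$. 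In the remaining case $\suppp(x^* - y) = \emptyset$, we have $\suppp(y - x^*) \ne \emptyset$ and $\suppm(y - x^*) = \emptyset$; pick $i \in \suppp(y - x^*)$ and apply \textbf{(\Mnat-EXC)} to $y, x^*$ and this $i$, which forces $j = 0$, yielding
\[
f(y) + f(x^*) \ge f(y - \chi_i) + f(x^* + \chi_i).
\]
Since $f(x^* + \chi_i) \ge f(x^*)$ is a direct consequence of \eqref{eqn:loc-min-global-min2}, we obtain $f(y - \chi_i) < f(x^*)$, and I set $y' := y - \chi_i$.

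The main obstacle, and the only nontrivial bookkeeping, is confirming $\|y' - x^*\|_1 < \|y - x^*\|_1$ in each case by tracking signs. In the first case, $(y - x^*)_i < 0$ is incremented by $1$ (so $|(y-x^*)_i|$ drops by $1$), and either $(y - x^*)_j > 0$ is decremented by $1$ (so $|(y-x^*)_j|$ drops by $1$) or $j = 0$ contributes nothing; so $\|y' - x^*\|_1$ strictly decreases. In the second case the distance drops by exactly $1$. Either way the minimality of $\|y - x^*\|_1$ is contradicted, completing the proof.
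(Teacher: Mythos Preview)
Your argument is correct. The paper does not prove this theorem directly (it is cited as known), but instead proves the generalization Theorem~\ref{thm:Mminimizer-SSQMn} for s.s.\ quasi \Mnat-convex functions via an auxiliary lemma: if $f(x)>f(y)$ then some $i,j$ give $f(x-\chi_i+\chi_j)<f(x)$, established by induction on the pair $(\alpha,\beta)$ of one-sided $L_1$-distances. The inductive step there also moves $y$ toward $x$ via the exchange axiom, so the core mechanism is the same minimum-distance descent you use. The packaging differs: you exploit the additive inequality in (\Mnat-EXC) together with local optimality at $x^*$ to push $y$ strictly closer in one stroke, whereas the paper's lemma is formulated under the weaker {\SSQMn} disjunction (no additive inequality available) and therefore must separate the ``descent at $x$'' case from the ``move $y$ and recurse'' case. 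Your route is slightly more direct for genuine \Mnat-convex functions but does not extend verbatim to the quasi setting treated in the paper.
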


 Theorem \ref{thm:Mminimizer-Mnat} makes it possible to
apply the following steepest descent algorithm to find
a minimizer of an \Mnat-convex function.

\begin{flushleft}
 \textbf{Algorithm} {\sc BasicSteepestDescent} 
\\
 \textbf{Step 0:} 
 Let $x_0 \in \dom f$ be an arbitrarily chosen initial vector.
 Set $x:=x_0$.
\\
 \textbf{Step 1:} 
If 
$f(x - \chi_{i} + \chi_{j}) \ge f(x)$
for every $i, j \in \Nzero$, 
then output $x$ and stop.\\
 \textbf{Step 2:} 
 Find $i, j \in \Nzero$ that minimize $f(x - \chi_{i} + \chi_{j})$.\\
 \textbf{Step 3:} 
 Set $x := x - \chi_{i} + \chi_{j}$ and go to Step 1.
\end{flushleft}

\begin{corollary}[{cf.~{\cite{Shi98min}}}]
 For an \Mnat-convex function $f: \Z^n \to \Rinf$ with 
{\nobreak $\arg\min f\ne\emptyset$},
Algorithm {\sc BasicSteepestDescent}  
finds a minimizer of $f$ in a finite number of iterations.
\end{corollary}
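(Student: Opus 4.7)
The plan is to combine correctness of the termination criterion with finite termination of the loop.

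Correctness is immediate from Theorem~\ref{thm:Mminimizer-Mnat}: the condition tested in Step~1, namely $f(x - \chi_i + \chi_j) \ge f(x)$ for all $i, j \in \Nzero$, is precisely the local characterization of global minimizers of \Mnat-convex functions. Hence whenever the algorithm halts, the output $x$ is a global minimizer of $f$.

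For finite termination, I would first observe that every execution of Steps~2--3 strictly decreases $f(x)$: Step~2 is entered only when the Step~1 condition fails, so some $i, j \in \Nzero$ satisfy $f(x - \chi_i + \chi_j) < f(x)$; since $(i,j)=(0,0)$ gives back $f(x)$, the minimum computed in Step~2 is strictly less than $f(x)$. Next, fix any $x^* \in \argmin f$, which exists by assumption, and apply \textbf{(\Mnat-EXC)} to the pair $(x, x^*)$. Using $f(x^* + \chi_i - \chi_j) \ge f(x^*)$ by minimality of $x^*$, one obtains, for every $i \in \suppp(x - x^*)$, some $j \in \suppm(x - x^*) \cup \{0\}$ with $f(x - \chi_i + \chi_j) \le f(x)$ and $\|x - \chi_i + \chi_j - x^*\|_1 < \|x - x^*\|_1$. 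Although the steepest-descent step may pick a different exchange, the existence of this non-increasing ``move toward $x^*$'' can be leveraged (via a standard argument as in \cite{Shi98min}) to confine the trajectory of iterates to a bounded subset of $\Z^n$ depending only on $x_0$ and $x^*$. Since this bounded subset contains only finitely many integer points and $f$ strictly decreases along the trajectory, the algorithm halts after finitely many iterations.

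The main obstacle is the boundedness claim in the last step: because $f$ is real-valued, strict decrease of the values $f(x_k)$ does not by itself force termination, and one must separately exploit the exchange structure to control how far the iterates can stray from a fixed minimizer. Once boundedness is established, strict monotonicity of $f$-values on the finite set of reachable integer points yields finite termination.
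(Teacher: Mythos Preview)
The paper does not supply its own proof of this corollary; it is stated with a reference to \cite{Shi98min}. So there is no paper proof to compare against directly, and I evaluate your proposal on its merits.

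Your correctness argument via Theorem~\ref{thm:Mminimizer-Mnat} is fine. The gap is exactly where you flag it: the boundedness of the steepest-descent trajectory. What you derive from \textbf{(\Mnat-EXC)} is that from any non-minimizer $x$ there exists \emph{some} exchange $(-\chi_i+\chi_j)$ which does not increase $f$ and strictly decreases $\|x-x^*\|_1$. But the algorithm selects the pair $(i,j)$ minimizing $f(x-\chi_i+\chi_j)$, and this pair need not coincide with the one moving toward $x^*$. The mere existence of a good move does not, by itself, prevent the actually chosen steepest move from increasing $\|x-x^*\|_1$, so the ``leverage'' you invoke is not available from what you have written.

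What closes the gap is a statement about the \emph{steepest} direction, not about some direction: either the minimizer cut property (Theorem~\ref{thm:min-cut-Mnat}), which says the steepest pair $(i,j)$ cuts off a half-space still containing a minimizer, or, more directly, the geodesic property (Theorem~\ref{thm:strong-min-cut-Mnat}), which shows that the modified distance $\widetilde\mu(x)$ drops by exactly $2$ at every steepest-descent step. Either of these is a substantive structural result about \Mnat-convex functions and is precisely the content of \cite{Shi98min} and its successors; it is not a routine consequence of the single-exchange inequality you wrote down. Once you invoke Theorem~\ref{thm:strong-min-cut-Mnat}, finite termination (indeed, termination in exactly $\widetilde\mu(x_0)/2$ steps) is immediate.
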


The optimality condition for \Mnat-convex functions 
in Theorem \ref{thm:Mminimizer-Mnat}
can be generalized to s.s.\;quasi \Mnat-convex functions.

\begin{theorem}
\label{thm:Mminimizer-SSQMn}
 Let $f: \Z^n \to \Rinf$ be a function satisfying {\SSQMn}. 
 A vector $x^* \in \dom f$ 
is a minimizer of $f$ if and only if
the condition \eqref{eqn:loc-min-global-min2} holds.
\end{theorem}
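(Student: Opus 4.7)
The plan is to prove the nontrivial \emph{if} direction by a standard descent argument from a closest counterexample, mirroring the usual proof for \Mnat-convex functions but being careful about the asymmetric role of $x$ and $y$ in \SSQMn. The \emph{only if} direction is immediate from the definition of a minimizer.

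For the \emph{if} direction, suppose that $x^* \in \dom f$ satisfies \eqref{eqn:loc-min-global-min2} but is not a global minimizer. Then the set $S = \{y \in \dom f \mid f(y) < f(x^*)\}$ is nonempty, and since $\|y - x^*\|_1$ takes values in the nonnegative integers, I can pick $y \in S$ minimizing $\|y - x^*\|_1$. I plan to derive a contradiction by constructing $y'$ with $f(y') \le f(y) < f(x^*)$ (hence $y' \in S$) and $\|y' - x^*\|_1 < \|y - x^*\|_1$, unless local optimality of $x^*$ itself is directly violated.

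The case split is based on whether $\suppp(x^* - y)$ is nonempty. If $\suppp(x^* - y) \ne \emptyset$, pick $i \in \suppp(x^* - y)$ and apply \SSQMn{} to the pair $(x^*, y)$ to obtain $j \in \suppm(x^* - y) \cup \{0\}$ satisfying one of \eqref{eqn:def-SSQMnat-ineq-1}--\eqref{eqn:def-SSQMnat-ineq-3}. Condition \eqref{eqn:def-SSQMnat-ineq-1} directly contradicts \eqref{eqn:loc-min-global-min2} at $x^*$. Under either \eqref{eqn:def-SSQMnat-ineq-2} or \eqref{eqn:def-SSQMnat-ineq-3}, I set $y' = y + \chi_i - \chi_j$; then $f(y') \le f(y) < f(x^*)$, and since $i \in \suppp(x^* - y)$ and $j \in \suppm(x^* - y) \cup \{0\}$ each move a coordinate of $y$ one step toward $x^*$, I get $\|y' - x^*\|_1 \le \|y - x^*\|_1 - 1$, contradicting minimality. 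The remaining case $\suppp(x^* - y) = \emptyset$ with $y \ne x^*$ forces $\suppp(y - x^*) \ne \emptyset$, so I apply \SSQMn{} to the \emph{swapped} pair $(y, x^*)$, picking $i \in \suppp(y - x^*)$; since $\suppm(y - x^*) = \emptyset$, the only available choice is $j = 0$, which gives the analogous three alternatives among $f(y - \chi_i), f(x^* + \chi_i)$. Again one alternative contradicts \eqref{eqn:loc-min-global-min2} and the other two produce a closer $y' = y - \chi_i \in S$.

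The main obstacle is precisely this need to invoke the axiom with $(x^*,y)$ or with $(y,x^*)$ depending on the sign pattern of $x^* - y$, since \SSQMn{} is stated asymmetrically in its two arguments. Beyond that, the argument is a routine proof-by-minimal-counterexample using the $\ell_1$-distance as the potential function, with the crucial quantitative input being that any admissible exchange coordinate $i \in \suppp(x^* - y)$ and $j \in \suppm(x^* - y) \cup \{0\}$ strictly reduces $\|y - x^*\|_1$.
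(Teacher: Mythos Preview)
Your proposal is correct and follows essentially the same approach as the paper. The paper packages the core step as a standalone lemma (if $f(x)>f(y)$ then $x$ has a strictly better neighbor in the direction of $y$), proved by induction on the pair $(\alpha,\beta)=(\sum_{\suppp(x-y)}|x(i)-y(i)|,\sum_{\suppm(x-y)}|x(j)-y(j)|)$, but the underlying mechanism---apply \SSQMn, swapping the roles of the two arguments when one support is empty, to shrink the $\ell_1$-distance---is identical to your minimal-counterexample argument.
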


\noindent
 The ``only if'' part of the theorem is easy to see. 
 The ``if'' part is implied immediately by the following lemma.

\begin{lemma}
  Let $f: \Z^n \to \Rinf$ be a function satisfying {\SSQMn}. 
 For $x, y \in \dom f$, if $f(x) > f(y)$, then
there exist some $i\in \suppp(x-y)\cup\{0\}$ and $j\in \suppm(x-y)\cup\{0\}$ 
satisfying 
$f(x)>f(x-\chi_i + \chi_j)$.
\end{lemma}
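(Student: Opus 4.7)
The plan is induction on the $\ell_{1}$-distance $d = \|x - y\|_{1}$, keeping $x$ fixed and shrinking $y$ toward $x$ step by step. For the base case $d = 1$, we have $y = x \pm \chi_{k}$ for some $k \in N$, and the pair $(i^{*}, j^{*}) = (0, k)$ or $(k, 0)$ respectively satisfies both the support constraint and $f(x - \chi_{i^{*}} + \chi_{j^{*}}) = f(y) < f(x)$.

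For the inductive step, first assume $\suppp(x - y) \ne \emptyset$. Pick any $i \in \suppp(x - y)$ and apply \SSQMnb{} to $(x, y)$ to obtain $j \in \suppm(x - y) \cup \{0\}$ satisfying \eqref{eqn:def-SSQMnat-ineq-1}, \eqref{eqn:def-SSQMnat-ineq-2}, or \eqref{eqn:def-SSQMnat-ineq-3}. Condition \eqref{eqn:def-SSQMnat-ineq-1} immediately gives the conclusion with $(i^{*}, j^{*}) = (i, j)$. Otherwise, set $\tilde y := y + \chi_{i} - \chi_{j}$; both \eqref{eqn:def-SSQMnat-ineq-2} and \eqref{eqn:def-SSQMnat-ineq-3} imply $f(\tilde y) \le f(y) < f(x)$, so $\tilde y \in \dom f$ and $f(x) > f(\tilde y)$. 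A coordinate-wise check gives $\|x - \tilde y\|_{1} < d$ together with $\suppp(x - \tilde y) \subseteq \suppp(x - y)$ and $\suppm(x - \tilde y) \subseteq \suppm(x - y)$, so the inductive hypothesis applied to $(x, \tilde y)$ yields indices of the required form for the original pair $(x, y)$.

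If instead $\suppp(x - y) = \emptyset$, apply \SSQMnb{} to the reversed pair $(y, x)$: for any $k \in \suppp(y - x) = \suppm(x - y)$, the accompanying index must lie in $\suppm(y - x) \cup \{0\} = \{0\}$, so one of the three conditions reads $f(y - \chi_{k}) < f(y)$, $f(x + \chi_{k}) < f(x)$, or equality in both. The middle case finishes the argument via $(i^{*}, j^{*}) = (0, k)$; in the other two cases $\tilde y := y - \chi_{k}$ satisfies $f(\tilde y) \le f(y) < f(x)$ with $\|x - \tilde y\|_{1} < d$ and shrunken supports, so induction closes out.

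The main obstacle is that in case \eqref{eqn:def-SSQMnat-ineq-3} one cannot simply replace $x$ by $x - \chi_{i} + \chi_{j}$, since the final pair $(i^{*}, j^{*})$ must witness descent from the \emph{original} $x$; the trick is to modify only $y$, which keeps $f(x)$ fixed and, via the support inclusions $\suppp(x - \tilde y) \subseteq \suppp(x - y)$ and $\suppm(x - \tilde y) \subseteq \suppm(x - y)$, ensures that indices returned by the inductive hypothesis remain valid for the original pair.
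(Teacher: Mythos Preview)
Your proof is correct and follows essentially the same approach as the paper's: induct on a size measure, apply \SSQMn{} to move $y$ one step closer to $x$ while keeping $f(\tilde y)\le f(y)<f(x)$, and use the support inclusions $\suppp(x-\tilde y)\subseteq\suppp(x-y)$, $\suppm(x-\tilde y)\subseteq\suppm(x-y)$ to carry the inductively obtained indices back to the original pair. The paper inducts on the pair $(\alpha,\beta)=\big(\sum_{\suppp(x-y)}|x(i)-y(i)|,\sum_{\suppm(x-y)}|x(j)-y(j)|\big)$ and splits into the cases $\alpha\ge 2$ versus $\beta\ge 2$, whereas you induct on $d=\alpha+\beta=\|x-y\|_1$ and split into $\suppp(x-y)\ne\emptyset$ versus $\suppp(x-y)=\emptyset$; these are cosmetic differences only.
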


\begin{proof}
 Putting
\[
 \alpha = \sum_{i\in \suppp(x-y)}|x(i)-y(i)|,
\qquad
\beta = \sum_{j\in \suppm(x-y)}|x(j)-y(j)|,
\]
we prove the lemma by induction on the pair of values  $(\alpha, \beta)$.
 If $\alpha \le 1$ and $\beta \le 1$, then
$y = x - \chi_i + \chi_j$ for some $i,j \in \Nzero$, 
and therefore
the claim holds immediately.

 Suppose $\alpha \ge 2$ and let $i \in \suppp(x-y)$.
 By {\SSQMn} applied to $x$, $y$, and $i$,
there exists some  $j \in \suppm(x-y)\cup\{0\}$ satisfying
$f(x - \chi_i + \chi_j) < f(x)$ or
$f(y + \chi_i - \chi_j) \le f(y)$ (or both).
 In the former case, we are done.
 In the latter case,
we can apply the induction hypothesis
to $x$ and $y'=y + \chi_i - \chi_j$
to obtain some 
$i'\in \suppp(x-y')\cup\{0\} \subseteq \suppp(x-y)\cup\{0\}$ 
and $j'\in \suppm(x-y')\cup\{0\}\subseteq \suppm(x-y)\cup\{0\}$ 
satisfying 
$f(x)>f(x-\chi_{i'} + \chi_{j'})$.
 The proof for the case $\beta \ge 2$ is similar.
\end{proof}

 It follows from Theorem \ref{thm:Mminimizer-SSQMn} that
we can also apply the steepest descent algorithm to find
a minimizer of an s.s.\;quasi \Mnat-convex function.

\begin{corollary}
 For a function $f: \Z^n \to \Rinf$ with $\arg\min f \ne \emptyset$
satisfying {\SSQMn},
Algorithm {\sc BasicSteepestDescent} 
finds a minimizer of $f$ in a finite number of iterations.
\end{corollary}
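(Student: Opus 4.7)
The plan is to combine the optimality characterization of Theorem~\ref{thm:Mminimizer-SSQMn} with the strict monotonic decrease of $f$ along the trajectory. First I would observe that the termination test in Step~1 is exactly the condition \eqref{eqn:loc-min-global-min2}, so by Theorem~\ref{thm:Mminimizer-SSQMn} the output upon termination is a global minimizer of $f$; correctness is therefore immediate.

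Next I would verify strict decrease: whenever Step~1 does not halt, there exists $(i, j) \in \Nzero \times \Nzero$ with $f(x - \chi_i + \chi_j) < f(x)$, and Step~2 selects a pair achieving the minimum of $f(x - \chi_i + \chi_j)$. The updated iterate therefore has function value strictly smaller than $f(x)$, so the iterates $x_0, x_1, x_2, \ldots$ have pairwise distinct function values, and in particular are pairwise distinct vectors in $\dom f$.

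Finiteness of the number of iterations is the main obstacle. I would fix any $x^* \in \arg\min f$ and apply the preceding Lemma with $y = x^*$ to exhibit, at each non-minimizer iterate $x$, a descent pair $(i, j)$ with $i \in \suppp(x-x^*) \cup \{0\}$ and $j \in \suppm(x-x^*) \cup \{0\}$. Since the pair chosen in Step~2 yields a value at most that of this aligned pair, I would try to show inductively that the trajectory stays in a bounded subset of $\Z^n$ determined by $x_0$ and $x^*$, so that finite termination follows from distinctness of iterates. The delicate point, compared to the \Mnat-convex case, is that the steepest direction need not coincide with the aligned direction given by the Lemma, and proximity-type bounds are unavailable for \SSQMn\ functions; a more careful case analysis invoking \SSQMn\ directly would be required, most plausibly by arguing that an unbounded excursion of the iterates would eventually contradict the optimality of $x^*$ through the exchange axiom.
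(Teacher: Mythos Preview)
The paper offers no proof of this corollary beyond the single sentence preceding it (``It follows from Theorem~\ref{thm:Mminimizer-SSQMn} that we can also apply the steepest descent algorithm\ldots''); it treats the statement as an immediate consequence of the local-optimality characterization.  Your first two paragraphs reproduce exactly what the paper actually supplies: termination implies global optimality by Theorem~\ref{thm:Mminimizer-SSQMn}, and the function value strictly decreases along the trajectory so the iterates are pairwise distinct.

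Where you go beyond the paper is in isolating finite termination as a separate issue---and you are right that strict decrease of a real-valued objective together with $\arg\min f\neq\emptyset$ does not, by itself, force the iteration to stop.  Your proposed fix (confine the trajectory to a box determined by $x_0$ and some $x^*$ via the preceding Lemma) is not completed, and the obstacle you name is real: the steepest pair chosen in Step~2 need not be the aligned pair produced by the Lemma, so the naive inductive bound does not close.  This is a genuine gap in your write-up, but it is one the paper simply does not address; the corollary is asserted, not argued.  If you want to close it, the natural tool is the minimizer-cut property proved later (Theorem~\ref{thm:min-cut-qMnat}): whenever a coordinate increases at a step, its new value is bounded above by that of some minimizer, and symmetrically for decreases, so if $\arg\min f$ is bounded the trajectory is trapped in the box $\prod_k [\min(x_0(k),a(k)),\max(x_0(k),b(k))]$ where $[a,b]\supseteq\arg\min f$, and distinctness of iterates then gives termination.
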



\subsection{Minimizer Cut Property}
\label{sec:min-cut}

 The minimizer cut property, originally shown for M-convex functions
\cite[Theorem 2.2]{Shi98min}
(see also Theorem \ref{thm:min-cut-M} in Appendix),
states that 
a separating hyperplane between a given vector $x$ and some minimizer
can be found 
by using the steepest descent direction at $x$
(i.e., vector $ - \chi_{i} + \chi_{j}$ with $i, j \in N$
that minimizes $f(x - \chi_{i} + \chi_{j})$).
 By rewriting  the minimizer cut property
for M-convex functions 
based on the relationship between M-convexity and \Mnat-convexity,
we obtain the following minimizer cut property for \Mnat-convex functions,
where $y(N)= \sum_{i \in N} y(i)$ for $y \in \Z^n$.

\begin{theorem}[{cf.~{\cite[Theorem 2.2]{Shi98min}}}]
\label{thm:min-cut-Mnat}
 Let $f: \Z^n \to \Rinf$ be an \Mnat-convex function with $\arg\min f \ne \emptyset$,
and $x \in \dom f$ be a vector with $x \not\in \arg\min f$.
 For a pair $(i,j)$ of distinct elements in $\Nzero$
minimizing the value $f(x - \chi_i + \chi_j)$,
there exists some minimizer $x^*$ of~$f$ satisfying
\begin{equation}
\begin{cases}
x^*(i) \le x(i)-1, \  
x^*(j) \ge x(j)+1 
& (\mbox{if }i,j \in N),\\
x^*(i) \le x(i)-1, \  
x^*(N) \le x(N) -1
& (\mbox{if }i \in N,\ j =0),\\
x^*(j) \ge x(j)+1, \  
x^*(N) \ge x(N) +1
& (\mbox{if }i =0,\ j \in N).
 \end{cases}
\label{eqn:mincut-cond-v-Mnat}
\end{equation}
\end{theorem}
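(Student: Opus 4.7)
The plan is to derive this from the M-convex minimizer cut property (Theorem \ref{thm:min-cut-M} in the appendix) by the classical lifting of an \Mnat-convex function on $\Z^N$ to an M-convex function on $\Z^{\Nzero}$. I would introduce an auxiliary coordinate indexed by $0$ and define $\tilde f:\Z^{\Nzero}\to\Rinf$ by
\[
\tilde f(\tilde x_0,\tilde x_1,\ldots,\tilde x_n) \;=\;
\begin{cases}
 f(\tilde x_1,\ldots,\tilde x_n) & \text{if } \tilde x_0+\sum_{k\in N}\tilde x_k=0,\\
 +\infty & \text{otherwise.}
\end{cases}
\]
It is standard that $\tilde f$ is then M-convex on the ground set $\Nzero$ and that the map $x\mapsto\tilde x:=(-x(N),x)$ is a bijection between $\dom f$ and $\dom\tilde f$ identifying their minimizers. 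I will use $\chi_k$ both for the characteristic vector of $k\in\Nzero$ inside $\Z^{\Nzero}$ and, as in the statement, for $\chi_k\in\{0,1\}^N$ with the convention $\chi_0=0$; the intended ambient space will be clear from context.

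Next I would verify that the distinguished pair $(i,j)$ of the hypothesis is also a steepest descent pair for $\tilde f$ at $\tilde x$. For any distinct $i,j\in\Nzero$, the lifted vector $\tilde x-\chi_i+\chi_j\in\Z^{\Nzero}$ still has coordinate sum $0$ and hence lies in $\dom\tilde f$; its projection onto coordinates $1,\ldots,n$ equals the $x-\chi_i+\chi_j$ appearing in the statement. Consequently the minimization of $\tilde f(\tilde x-\chi_i+\chi_j)$ over distinct pairs $i,j\in\Nzero$ coincides with the minimization of $f(x-\chi_i+\chi_j)$, so the pair $(i,j)$ in the hypothesis is also steepest-descent-optimal for $\tilde f$.

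Then I would apply Theorem \ref{thm:min-cut-M} to $\tilde f$, $\tilde x$, and $(i,j)$ to obtain a minimizer $\tilde x^*$ of $\tilde f$ with $\tilde x^*(i)\le\tilde x(i)-1$ and $\tilde x^*(j)\ge\tilde x(j)+1$. The vector $x^*:=(\tilde x^*_1,\ldots,\tilde x^*_n)$ is then a minimizer of $f$ by the correspondence, and using $\tilde x_0=-x(N)$ and $\tilde x^*_0=-x^*(N)$, the two inequalities translate case by case into \eqref{eqn:mincut-cond-v-Mnat}: when $i,j\in N$ they are preserved verbatim; when $j=0$, the inequality $\tilde x^*(0)\ge\tilde x(0)+1$ becomes $x^*(N)\le x(N)-1$; when $i=0$, the inequality $\tilde x^*(0)\le\tilde x(0)-1$ becomes $x^*(N)\ge x(N)+1$. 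Combined with the surviving coordinate inequality, this gives exactly \eqref{eqn:mincut-cond-v-Mnat} in each case.

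The proof is essentially a direct reduction, so the main thing to handle carefully is the coordinate-$0$ translation in the three-way case distinction, together with checking that the lifting preserves steepest descent pairs under all three combinations of $i,j\in N$ versus $i$ or $j$ equal to $0$. Apart from this bookkeeping, no deeper obstacle arises.
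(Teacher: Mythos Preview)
Your proposal is correct and is exactly the approach the paper indicates: the paper does not give an explicit proof of this theorem but states that it is obtained ``by rewriting the minimizer cut property for M-convex functions based on the relationship between M-convexity and \Mnat-convexity,'' i.e., via the standard lifting $\tilde f$ on $\Z^{\Nzero}$ (the paper's \eqref{eqn:def-tildef}) followed by an application of Theorem~\ref{thm:min-cut-M}(iii) and the coordinate-$0$ translation you carry out. One minor wording quibble: having coordinate sum $0$ does not by itself place $\tilde x-\chi_i+\chi_j$ in $\dom\tilde f$, but this is harmless since the identity $\tilde f(\tilde x-\chi_i+\chi_j)=f(x-\chi_i+\chi_j)$ holds regardless (both sides may equal $+\infty$), which is all your argument needs.
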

\noindent
 Other variants of the 
minimizer cut property of \Mnat-convex functions
are given in Section \ref{sec:min-cut-remark},
which capture the technical core of Theorem \ref{thm:min-cut-Mnat}.

 Using Theorem \ref{thm:min-cut-Mnat} we can provide an upper bound
for the number of iterations in the following variant
of the steepest descent algorithm, where
$\dom f$ is assumed to be bounded, and 
the integer interval $[\ell, u] = \{x \in \Z^n \mid \ell \le x \le u\}$
always contains
a minimizer of $f$.

\begin{flushleft}
 \textbf{Algorithm} {\sc ModifiedSteepestDescent} 
\\
 \textbf{Step 0:} 
 Let $x_0 \in \dom f$ be an arbitrarily chosen initial vector.
 Set $x:=x_0$.\\
 \phantom{\textbf{Step 0:}} 
 Let $\ell, u \in \Z^n$ be vectors such that
$\dom  f \subseteq [\ell, u]$.
\\
 \textbf{Step 1:} 
If 
$f(x - \chi_{i} + \chi_{j}) \ge f(x)$
for every $i, j \in \Nzero$
with $x - \chi_{i} + \chi_{j} \in [\ell, u]$, \\
 \phantom{\textbf{Step 0:}} 
then output $x$ and stop.\\
 \textbf{Step 2:} 
 Find $i, j \in \Nzero$ 
with  $x - \chi_{i} + \chi_{j} \in [\ell, u]$ 
that minimize $f(x - \chi_{i} + \chi_{j})$.
\\
 \textbf{Step 3:} 
 Set $x := x - \chi_{i} + \chi_{j}$,
$u(i):=x(i)- 1$ if $i \in N$, 
and $\ell(j):=x(j)+1$ if $j \in N$.\\
 \phantom{\textbf{Step 0:}} 
 Go to Step 1.
\end{flushleft}

\noindent
 We define the \textit{L$_\infty$-diameter}
of a bounded set $S \subseteq \Z^n$ by
\[
 L_\infty(S) = \max\{\|x-y\|_\infty \mid x, y \in S\}.
\]
 
\begin{corollary}[{cf.~{\cite[Section 2]{Shi98min}}}]
 For  an \Mnat-convex function $f: \Z^n \to \Rinf$ with 
a bounded effective domain,
Algorithm {\sc ModifiedSteepestDescent} 
finds a minimizer of $f$ in $O(nL)$ iterations with $L =L_\infty(\dom f)$.
\end{corollary}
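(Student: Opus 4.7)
The plan is to prove correctness and bound the iteration count by maintaining, as an invariant across iterations, that the current box $[\ell,u]$ contains some global minimizer of $f$. The key auxiliary object is the function $\tilde{f} := f + \delta_{[\ell,u]}$, obtained by adding the $\{0,+\infty\}$-valued indicator of the current box; because box indicators are separable convex and \Mnat-convexity is preserved under addition of separable convex functions, $\tilde{f}$ is itself \Mnat-convex. Under the invariant, $\min \tilde{f} = \min f$, and the pair $(i,j)$ selected in Step 2 is precisely a steepest descent pair for $\tilde{f}$ (since pairs leaving the box give $\tilde{f}=+\infty$).

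For the invariant, the base case is the assumption $\dom f \subseteq [\ell_0, u_0]$. For the inductive step, apply Theorem \ref{thm:min-cut-Mnat} to $\tilde{f}$ at the current $x$ with the chosen pair $(i,j)$: this yields a minimizer $x^*$ of $\tilde{f}$ (hence of $f$) with $x^*(i) \le x(i)-1$ whenever $i \in N$ and $x^*(j) \ge x(j)+1$ whenever $j \in N$, and these inequalities match exactly the updates made to $u,\ell$ in Step 3, so $x^*$ remains in the new box. For termination correctness, the stopping condition reads $\tilde{f}(x-\chi_i+\chi_j)\ge \tilde{f}(x)$ for every $(i,j)\in\Nzero\times\Nzero$ (pairs that leave $[\ell,u]$ are trivially covered since $\tilde{f}=+\infty$ there); Theorem \ref{thm:Mminimizer-Mnat} applied to $\tilde{f}$ then identifies $x$ as a minimizer of $\tilde{f}$, which by the invariant is a global minimizer of $f$.

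For the iteration bound, introduce the potential $\Phi=\sum_{i=1}^n (u(i)-\ell(i))$. Choosing initial $\ell_0, u_0$ with $u_0(i)-\ell_0(i) \le L$ for each $i$ gives $\Phi_0 \le nL$. A non-terminating iteration must have $i\ne j$, since the chosen pair satisfies $f(x-\chi_i+\chi_j) < f(x)$; consequently the updates in Step 3 strictly decrease $\Phi$ by at least $1$ (by $2$ when both $i,j\in N$). Since $\Phi \ge 0$ at all times, the algorithm halts after $O(L)$ iterations (with the implicit constant linear in $n$).

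The main subtlety, in my view, lies in the reduction to $\tilde{f}$: Theorem \ref{thm:min-cut-Mnat} is stated for an \Mnat-convex function on all of $\Z^n$, whereas Step 2 optimizes only over directions staying in $[\ell,u]$. Embedding the box constraint in the objective restores the hypothesis of the theorem and aligns the algorithm's steepest descent direction with the one delivered by the minimizer cut property. Once this alignment is established, both the propagation of the invariant and the potential argument are essentially routine.
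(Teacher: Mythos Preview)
Your proof is correct and follows the approach the paper indicates (it does not spell out a proof, only pointing to the invariant that $[\ell,u]$ always contains a minimizer and citing \cite{Shi98min}); your elaboration via the auxiliary function $\tilde f = f + \delta_{[\ell,u]}$ is a clean way to justify applying Theorem~\ref{thm:min-cut-Mnat} when Step~2 optimizes only over box-respecting directions. Your potential argument yields $nL$ iterations, which is indeed what the paper's ``$O(L)$'' is meant to convey (the $n$-factor is suppressed there as in the original \cite{Shi98min}).
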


 While the number of iterations in
the algorithm {\sc ModifiedSteepestDescent} is proportional to
the L$_\infty$-diameter of $\dom f$, 
the domain reduction approach in \cite{Shi98min}
(see Section \ref{sec:domain_reduction};
see also \cite[Section 10.1.3]{Mdcasiam}),
combined with the minimizer cut property, makes it possible to
speed up the computation of a minimizer.

\begin{corollary}[{cf.~{\cite[Theorem 3.2]{Shi98min}}}]
\label{coro:domain-reduction-Mnat}
 Let $f: \Z^n \to \Rinf$ be an \Mnat-convex function with 
a bounded effective domain,
and suppose that a function evaluation oracle for $f$
and a vector in $\dom f$ are available.
 Then, a minimizer of $f$ can be obtained in $O(n^4 (\log L)^2)$ time
with $L =L_\infty(\dom f)$.
\end{corollary}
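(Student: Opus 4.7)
The proof plan is to wrap the minimizer cut property of Theorem \ref{thm:min-cut-Mnat} inside the binary-search style \emph{domain reduction} framework (to be developed in Section \ref{sec:domain_reduction}). Concretely, I would maintain an integer box $[\ell, u]$ that provably contains a minimizer of $f$ and, at each outer iteration, halve its $L_\infty$-extent along at least one coordinate rather than merely shave off a unit-thick layer as in Algorithm {\sc ModifiedSteepestDescent}.

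First, initialize $[\ell, u]$ as a bounding box of $\dom f$; from the given vector in $\dom f$ together with the bounded-domain assumption, this yields a starting box of $L_\infty$-diameter $O(L)$. At each outer iteration, take the coordinate-wise midpoint vector $x$ of the current box and identify, by $O(n^2)$ oracle calls, the pair $(i^*, j^*) \in \Nzero \times \Nzero$ minimizing $f(x - \chi_{i^*} + \chi_{j^*})$. A scaled form of Theorem \ref{thm:min-cut-Mnat} then certifies that a minimizer lies in a sub-box whose $L_\infty$-extent along at least one coordinate has been halved. Since \Mnat-convexity is preserved under restriction to an integer box, the same step can be iterated on the shrunken box. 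After $O(n \log L)$ halvings (at most $O(\log L)$ per coordinate) the box degenerates to a single vector, which is the desired minimizer.

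For the time bound, each outer iteration uses $O(n^2)$ oracle calls to find the best descent pair and $O(n^2 \log L)$ additional work to execute the scaled cut and update $[\ell, u]$; summing over the $O(n \log L)$ outer iterations yields $O(n^4 (\log L)^2)$ total. The principal obstacle is the \emph{scaled} minimizer cut property itself: Theorem \ref{thm:min-cut-Mnat} in its stated form discards only a unit-thick slab of the box, which would give an $O(L)$ rather than an $O(\log L)$ dependence. Amplifying a unit cut at the midpoint into a half-box cut requires exploiting the base-polyhedron structure of \Mnat-convex functions and the convexity of their level sets, so that the descent direction found at the midpoint remains valid across an entire half of the box; this is the technical content of the domain reduction machinery of Section \ref{sec:domain_reduction}, after which the corollary follows by simply tallying the per-iteration cost.
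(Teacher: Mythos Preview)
Your high-level plan---domain reduction driven by the minimizer cut property---is the paper's plan, but your description of the mechanism contains a genuine misconception that would derail a detailed write-up.

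There is no ``scaled'' minimizer cut, and no amplification of a unit cut into a half-box cut is ever performed or needed. Section~\ref{sec:domain_reduction} applies Theorem~\ref{thm:min-cut-Mnat} \emph{verbatim} at a point $x$ chosen to lie in the \emph{peeled set} $\widehat{B}$, namely the current set $B$ shrunk by a $1/n$ fraction from each face of its bounding box. The resulting unit cut $x^*(i)\le x(i)-1$ (or $x^*(j)\ge x(j)+1$) then removes at least a $1/n$ fraction of the extent along that coordinate---not a half. Consequently the number of outer iterations is $O(n^2\log L)$ (at most $O(n\log L)$ cuts per coordinate), not $O(n\log L)$; your arithmetic $O(n\log L)\cdot O(n^2\log L)$ would give only $O(n^3(\log L)^2)$, so you landed on the right bound for the wrong reason.

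The step you passed over is the nontrivial one: the coordinate-wise midpoint of the bounding box need not lie in $\dom f$, since an \Mnat-convex set is generally not a box. The paper therefore maintains $B$ as an \Mnat-convex set (the intersection of $\dom f$ with the accumulated half-spaces), proves that the peeled set $\widehat{B}$ of a bounded \Mnat-convex set is nonempty (by reduction to the M-convex case), and spends $O(n^2\log L)$ time per iteration to \emph{find} a point of $\widehat{B}$. That cost---not any ``scaled cut''---is the $O(n^2\log L)$ per-iteration term, and multiplying it by the $O(n^2\log L)$ iterations gives the stated $O(n^4(\log L)^2)$.
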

\noindent
 Note that faster polynomial-time algorithms
based on the scaling technique
are also available for \Mnat-convex function minimization
\cite{Shimin04,Tam05scale}.

 An s.s.\;quasi \Mnat-convex function
satisfies the following weaker
statement than Theorem \ref{thm:min-cut-Mnat}.
To be specific, the inequality
$x^*(N) \le x(N) -1$ in the second case ($i \in N,\; j =0$)
of \eqref{eqn:mincut-cond-v-Mnat} 
is missing in \eqref{eqn:mincut-cond-v} below,
and
the inequality
$x^*(N) \ge x(N) +1$ in the third case ($i =0,\; j \in N$)
of \eqref{eqn:mincut-cond-v-Mnat}
is missing in \eqref{eqn:mincut-cond-v};
an example illustrating this difference is given later
in Example \ref{ex:min-cut-qMnat-1}.

\begin{theorem}
\label{thm:min-cut-qMnat}
 Let $f: \Z^n \to \Rinf$ be a function 
with {\SSQMn} satisfying $\arg\min f \ne \emptyset$,
and $x \in \dom f$ be a vector with $x \not\in \arg\min f$.
 For a pair $(i,j)$ of distinct elements in $\Nzero$
minimizing the value $f(x - \chi_i + \chi_j)$,
there exists some minimizer $x^*$ of~$f$ satisfying
\begin{equation}
\left\{\begin{array}{ll}
 x^*(i) \le x(i)-1, \  
 x^*(j) \ge x(j)+1 
 & (\mbox{if }i,j \in N),\\
 x^*(i) \le x(i)-1
 & (\mbox{if }i \in N,\ j =0),\\
 x^*(j) \ge x(j)+1
 & (\mbox{if }i =0,\ j \in N).
\end{array} 
\right.
\label{eqn:mincut-cond-v}
\end{equation}
\end{theorem}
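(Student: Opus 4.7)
The plan is to imitate the standard proof of Theorem~\ref{thm:min-cut-Mnat} for \Mnat-convex functions, using the weaker axiom \SSQMn\ in place of {\bf (\Mnat-EXC)} and choosing the comparison minimizer by a suitable potential rather than by distance. Two preliminary facts motivate the argument: since $x \notin \arg\min f$, Theorem~\ref{thm:Mminimizer-SSQMn} guarantees that $f(x') < f(x)$ for $x' := x - \chi_i + \chi_j$; and the steepest-descent choice of $(i,j)$ yields $f(x - \chi_{i'} + \chi_{j'}) \ge f(x')$ for every $(i',j') \in \Nzero \times \Nzero$. Both inequalities will be invoked repeatedly.

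For each of the three cases of \eqref{eqn:mincut-cond-v}, I would introduce a potential on $\arg\min f$ that vanishes precisely on the minimizers satisfying the desired conclusion:
\[
 \Phi(y) = \max\{0,\, y(i) - x(i) + 1\} + \max\{0,\, x(j) + 1 - y(j)\}
\]
in the case $i,j \in N$, and the appropriate one-term restriction in the other two cases. Assuming for contradiction that $\Phi(y) \ge 1$ for every minimizer $y$, I would pick $x^{*}$ minimizing $\Phi$ on $\arg\min f$. Then at least one of $x^{*}(i) \ge x(i)$ or $x^{*}(j) \le x(j)$ holds, which supplies an index in $\suppp(x^{*} - x')$ (namely $i$) or in $\suppp(x' - x^{*})$ (namely $j$) along which to apply \SSQMn\ to the pair $(x^{*}, x')$ or $(x', x^{*})$, respectively.

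The axiom returns an index $j'$ satisfying one of \eqref{eqn:def-SSQMnat-ineq-1}--\eqref{eqn:def-SSQMnat-ineq-3}. Alternative \eqref{eqn:def-SSQMnat-ineq-1} would give a strict decrease of $f$ from $x^{*}$, contradicting $x^{*} \in \arg\min f$. Alternative \eqref{eqn:def-SSQMnat-ineq-2} would give a strict decrease of $f$ from $x'$; rewriting the resulting vector as a single-exchange neighbour of $x$ and enumerating the cases $j' \in \{0, i, j\}$ and $j' \in N \setminus \{i, j\}$, every subcase either collapses to $x$ (contradicting $f(x) > f(x')$) or violates the steepest-descent minimality of $f(x')$. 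Alternative \eqref{eqn:def-SSQMnat-ineq-3} produces a new minimizer $x^{**}$ differing from $x^{*}$ by a single exchange, and a direct calculation verifies $\Phi(x^{**}) < \Phi(x^{*})$, contradicting the choice of $x^{*}$.

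The main obstacle I anticipate is the case analysis inside alternative \eqref{eqn:def-SSQMnat-ineq-2}: the precise form of the contradiction depends on whether $j'$ equals $0$, coincides with one of $i,j$, or is distinct from both, and the matching single-exchange pair from $x$ must be identified in each subcase. A secondary technicality arises in alternative \eqref{eqn:def-SSQMnat-ineq-3} when $j'$ equals $j$ (or $i$, in the symmetric application to $(x', x^{*})$): both summands of $\Phi$ then change simultaneously, and one must verify that the joint effect is indeed a strict decrease, in fact by $2$ rather than by $0$.
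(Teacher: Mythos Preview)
Your approach is correct but differs from the paper's. The paper proceeds in two stages: first it proves one-sided lemmas (Theorem~\ref{thm:min-cut-qMnat-2}), where for a fixed $i$ one finds a minimizer with $x^*(j)\ge x(j)+1$ by taking $x^*\in\arg\min f$ \emph{maximizing} $x^*(j)$ (a single-coordinate potential in place of your $\Phi$) and applying \SSQMn\ to $x',x^*,j$; the symmetric lemma handles the $i$-coordinate. Then, for the case $i,j\in N$, it restricts $f$ to the halfspace $\{y:y(j)\ge x(j)+1\}$, observes that the restriction still satisfies \SSQMn\ and that its minimizers are minimizers of $f$, and reapplies the one-sided lemma to obtain $x^{**}(i)\le x(i)-1$. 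Your combined potential $\Phi$ handles both coordinates simultaneously and avoids the restriction step, at the price of the extra bookkeeping you flag (the cross-index subcase $j'\in\{i,j\}$ in the equality alternative). The paper's modular route keeps each application of \SSQMn\ minimal and isolates the one-sided statements as reusable lemmas (they reappear in Section~\ref{sec:min-cut-remark}). One cosmetic point: in your sketch the labels \eqref{eqn:def-SSQMnat-ineq-1} and \eqref{eqn:def-SSQMnat-ineq-2} swap roles depending on whether you apply \SSQMn\ to $(x^*,x')$ or to $(x',x^*)$, so the sentence ``Alternative \eqref{eqn:def-SSQMnat-ineq-1} would give a strict decrease of $f$ from $x^*$'' is literally correct only for the first orientation; the logic is symmetric, but you should say so explicitly when writing it out.
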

\noindent
 A proof of this theorem is given in Section \ref{sec:proof-min-cut}.

 Using Theorem \ref{thm:min-cut-qMnat} we can  obtain the same upper bound
for the number of iterations in 
the algorithm {\sc ModifiedSteepestDescent} applied to
s.s.\;quasi \Mnat-convex functions.

\begin{corollary}
 For a function $f: \Z^n \to \Rinf$ with a bounded effective domain
satisfying {\SSQMn},
Algorithm {\sc ModifiedSteepestDescent} 
finds a minimizer of $f$ in $O(nL)$ iterations with $L =L_\infty(\dom f)$.
\end{corollary}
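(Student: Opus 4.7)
The plan is to mirror the proof of the corresponding \Mnat-convex result, with Theorem~\ref{thm:min-cut-qMnat} replacing Theorem~\ref{thm:min-cut-Mnat}. I will maintain two loop invariants, (a) $x \in [\ell,u]$ and (b) $\arg\min f \cap [\ell,u] \ne \emptyset$, initializing $[\ell,u]$ as the tightest integer box around $\dom f$ so that $u(k)-\ell(k) \le L$ for every $k \in N$ and both invariants hold at the start.

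To propagate invariant (b) through Step~3, I would apply Theorem~\ref{thm:min-cut-qMnat} not to $f$ itself but to the function $g := f + \delta_{[\ell,u]}$ obtained by restricting $f$ to the current box; a direct check of the three clauses of \SSQMn\ shows that such restriction preserves semi-strict quasi \Mnat-convexity, so Theorem~\ref{thm:min-cut-qMnat} applies to $g$. Because the pair $(i,j)$ selected in Step~2 minimizes $g(x-\chi_i+\chi_j)$ over $\Nzero \times \Nzero$, the theorem produces some $x^* \in \arg\min g = \arg\min f \cap [\ell,u]$ satisfying \eqref{eqn:mincut-cond-v}, whose three clauses align precisely with the three possible updates of Step~3, so $x^*$ remains in the shrunk box. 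Invariant (a) is a mechanical check from the feasibility condition already imposed in Step~2.

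For correctness at termination, suppose $x \notin \arg\min f$ and pick $x^* \in \arg\min f \cap [\ell,u]$ from (b), so that $f(x) > f(x^*)$. The lemma used to prove Theorem~\ref{thm:Mminimizer-SSQMn} yields some $i \in \suppp(x-x^*)\cup\{0\}$ and $j \in \suppm(x-x^*)\cup\{0\}$ with $f(x-\chi_i+\chi_j) < f(x)$; the inequalities $\ell(i) \le x^*(i) < x(i)$ for $i \in N$ and $x(j) < x^*(j) \le u(j)$ for $j \in N$ give $x-\chi_i+\chi_j \in [\ell,u]$, contradicting the stopping rule. For the iteration count, I would use the potential $\Phi = \sum_{k \in N}(u(k)-\ell(k))$, which is at most $nL$ initially and strictly decreases in every non-terminating iteration (since non-termination forces $(i,j) \ne (0,0)$, and Step~3 then either decreases some $u(k)$ or increases some $\ell(k)$), yielding the $O(L)$ bound with $n$ absorbed into the implicit constant. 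The main obstacle I anticipate is the verification that \SSQMn\ is preserved under restriction to a box; once this is secured, the remaining arguments are direct translations of the \Mnat-convex proof.
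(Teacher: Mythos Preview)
Your proposal is correct and follows the natural line the paper implies; the paper itself offers no detailed proof of this corollary beyond the remark that Theorem~\ref{thm:min-cut-qMnat} yields the same bound as in the \Mnat-convex case. Your one nontrivial step---that restricting an s.s.\ quasi \Mnat-convex function to an integer box preserves \SSQMn---is exactly the kind of closure the paper already invokes (restriction to a half-space) inside the proof of Theorem~\ref{thm:min-cut-qMnat}, so the verification is routine and the rest of your argument goes through as written.
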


A combination of Theorem \ref{thm:min-cut-qMnat}
with the domain reduction approach
(described in Section \ref{sec:domain_reduction})
makes it possible to find a minimizer of an
s.s.\;quasi \Mnat-convex function in time polynomial in
$n$ and $\log L_\infty(\dom f)$,
provided that $\dom f$ is an \Mnat-convex set.
 Note that the effective domain of an s.s.\;quasi \Mnat-convex function
is not necessarily an \Mnat-convex set (see \eqref{eqn:def-Mnat-conv-set} in
Section \ref{sec:domain_reduction}
for the definition of \Mnat-convex set).

\begin{corollary}
\label{cor:domain-reduction-quasiMnat}
 Let $f: \Z^n \to \Rinf$ be a function satisfying {\SSQMn},
and suppose that the effective domain of $f$ is bounded and \Mnat-convex.
 Also, suppose that
a function evaluation oracle for $f$ and a vector in $\dom f$ are available.
 Then, a minimizer of $f$ can be obtained in $O(n^4 (\log L)^2)$ time
with $L =L_\infty(\dom f)$.
\end{corollary}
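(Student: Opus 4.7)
The plan is to run Shioura's domain reduction algorithm from \cite{Shi98min} (the algorithm underlying Corollary \ref{coro:domain-reduction-Mnat}) essentially unchanged, with the minimizer cut property for \Mnat-convex functions (Theorem \ref{thm:min-cut-Mnat}) replaced throughout by its quasi-convex analogue (Theorem \ref{thm:min-cut-qMnat}). The algorithm maintains an \Mnat-convex search region $S \subseteq \Z^n$ that is guaranteed to contain a minimizer of $f$, initialized to $S := \dom f$ (which is \Mnat-convex by hypothesis). In each iteration, it uses the combinatorial structure of $S$ to select a ``central'' point $x \in S$, evaluates $f(x - \chi_i + \chi_j)$ for all $i, j \in \Nzero$ (costing $O(n^2)$ oracle calls), and either certifies $x$ as a minimizer via Theorem \ref{thm:Mminimizer-SSQMn}, or takes the minimizing pair $(i,j)$ and invokes Theorem \ref{thm:min-cut-qMnat} to reduce $S$ by the single-coordinate cut prescribed in \eqref{eqn:mincut-cond-v}.

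The structural fact that makes this iteration legitimate is that the intersection of an \Mnat-convex set with a half-space of the form $\{z \in \Z^n \mid z(k) \le c\}$ or $\{z \in \Z^n \mid z(k) \ge c\}$ is again \Mnat-convex. Every cut in \eqref{eqn:mincut-cond-v} is exactly of this form; this is the precise sense in which Theorem \ref{thm:min-cut-qMnat} is ``weaker'' than Theorem \ref{thm:min-cut-Mnat}, whose second and third cases additionally cut along a hyperplane of the form $z(N) = c$. Consequently $S$ remains \Mnat-convex throughout the algorithm, and this is precisely why the hypothesis that $\dom f$ be \Mnat-convex cannot be dropped: without it, the very first reduction would typically destroy the \Mnat-convex structure that the center-selection subroutine relies on.

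The main obstacle to a completely routine proof is verifying that the domain reduction argument of \cite{Shi98min} never actually exploits the sum-type cuts present in \eqref{eqn:mincut-cond-v-Mnat} but absent from \eqref{eqn:mincut-cond-v}, so that the weaker Theorem \ref{thm:min-cut-qMnat} still suffices at every step. Inspection of the argument shows that all cuts used there are of single-coordinate type, so the complexity analysis of \cite{Shi98min} (see also \cite[Section 10.1.3]{Mdcasiam}) transfers verbatim and yields the stated bound $O(n^4 (\log L)^2)$.
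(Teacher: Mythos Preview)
Your proposal is correct and follows essentially the same approach as the paper: run the domain reduction scheme of \cite{Shi98min} on the \Mnat-convex search region, using Theorem~\ref{thm:min-cut-qMnat} in place of Theorem~\ref{thm:min-cut-Mnat} and relying on the fact that \Mnat-convex sets are closed under single-coordinate box cuts, so that the ``central point'' (peeled-set) subroutine and the $O(n^2\log L)$ iteration bound carry over unchanged. The paper's proof in Section~\ref{sec:domain_reduction} spells this out explicitly by isolating conditions (DR1) and (DR2) and verifying each, including a separate lemma that the peeled set of a bounded \Mnat-convex set is nonempty, but the content is the same as what you outline.
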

\noindent
 A proof of this corollary is given in Section \ref{sec:domain_reduction}.

\begin{example}\rm 
\label{ex:min-cut-qMnat-1}
 This example shows that
the statement of Theorem~\ref{thm:min-cut-Mnat},
stronger than Theorem \ref{thm:min-cut-qMnat},
is not true for s.s.\;quasi \Mnat-convex functions.
 Consider the function $f: \Z^3 \to \Rinf$ defined by
\begin{align*}
& f(2,1,0)=f(2,0,1) = 0, && f(1,1,0)=f(1,0,1) = 1,\\ 
& f(0,1,1)=f(0,0,2) = 2, &
& f(1,1,1)=f(1,0,2) = 3,\\ 
& f(0,1,2)=4, &
& f(x_1, x_2, x_3) = + \infty \mbox{ otherwise}
\end{align*}
(see Figure \ref{fig:geodesic-example2}).
 This function $f$ satisfies {\SSQMn}
and has two minimizers  $y^* = (2,1,0)$  and $y^{**}=(2,0,1)$
(denoted by {\boldmath ${\bf \bigcirc}$} in Figure \ref{fig:geodesic-example2}).
\begin{figure}[t]
 \begin{center}
 \includegraphics[width=0.4\textwidth]{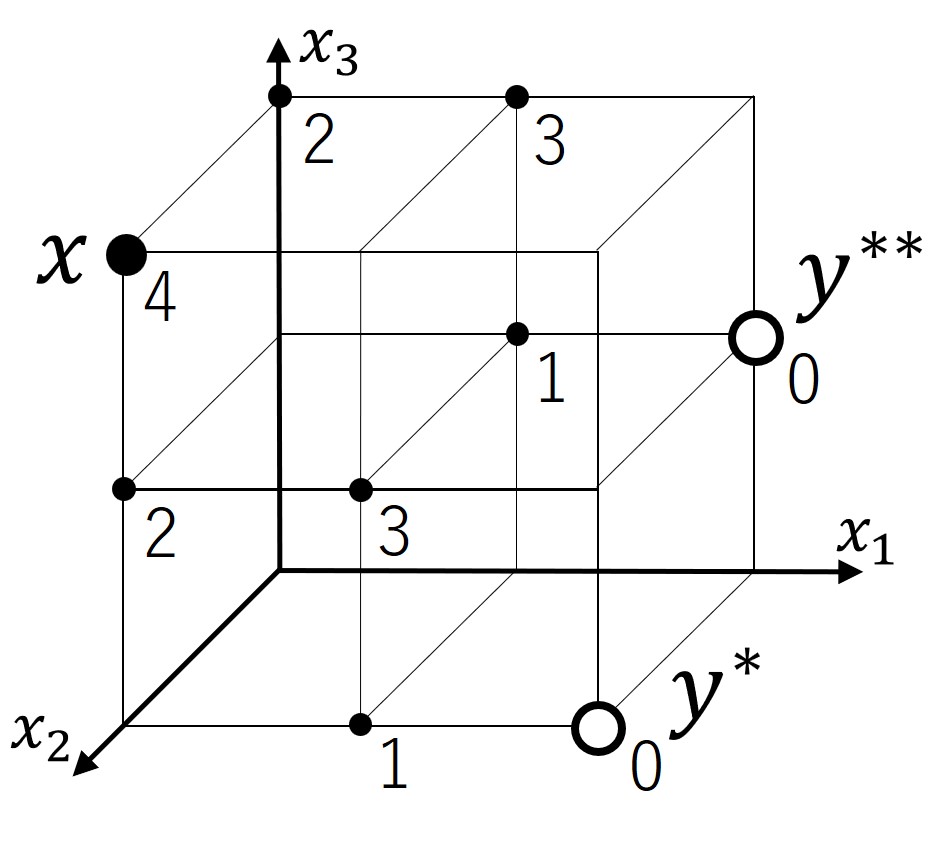}
 \caption{Values of function $f$
 in Example \ref{ex:min-cut-qMnat-1}.}
 \label{fig:geodesic-example2}
 \end{center}
 \end{figure}
 For $x =(0,1,2)$, the pair $(i,j) = (2, 0)$ is a valid choice
in Theorem \ref{thm:min-cut-qMnat}, since
we have
\begin{align*}
&  x - \chi_i + \chi_j = (0,0,2), 
\\
&  f(x - \chi_i + \chi_j) = 2 = \min_{i',j'\in \Nzero} f(x - \chi_{i'} + \chi_{j'}).
\end{align*} 
However,  neither of the two minimizers $y^* = (2,1,0)$  and $y^{**}=(2,0,1)$
satisfies the inequality  $x^*(N) \le x(N)-1$ in \eqref{eqn:mincut-cond-v-Mnat},
while $y^{**}=(2,0,1)$  satisfies
the inequality $x^{*}(i) \le x(i)-1 = 0$
in~\eqref{eqn:mincut-cond-v}.
\qed
\end{example}


\subsection{Geodesic Property} 
\label{sec:geodesic}

 Geodesic property of a function $f: \Z^n \to \Rinf$
means that
whenever a vector $x \in \dom f$ moves to 
a local minimizer $x' \in N(x)$ 
in an appropriately defined neighborhood $N(x)$ of $x$,
the distance $\|x^* - x\|$
to a nearest minimizer $x^*$ from the current solution $x$
decreases by $\|x' - x\|$, where $\|\cdot\|$ is an appropriately chosen norm.
 It is known that M-convex functions
have the geodesic property with respect to 
the L$_1$-norm
\cite[Corollary~4.2]{Shi22L1}, \cite[Theorem~2.4]{MS21steepM}. 
 We first point out that
\Mnat-convex functions do not enjoy
the geodesic property with respect to the L$_1$-norm.

  For a function $f: \Z^n \to \Rinf$ and a vector
$x \in \dom f$, we define
\begin{align}
\mu(x) & =  \min\{\|x^* - x \|_1  \mid x^* \in \arg\min f\},
\label{eqn:def-mu}
\\
M(x) & =  \{x^* \in \Z^n \mid x^* \in \arg\min f, \  \|x^* - x \|_1 = \mu(x)  \}.
\label{eqn:def-Mx}
\end{align}
 The following is an expected  plausible statement of
the geodesic property for \Mnat-convex functions 
with respect to the L$_1$-norm.

\medskip

\noindent
{\bf Statement A:} \quad
 Let $x \in \dom f$ be a vector that is not a minimizer of $f$.
 Also, let $(i,j)$ be a pair of distinct elements in $\Nzero$
 minimizing the value $f(x - \chi_i + \chi_j)$,
 and define 
 \begin{equation*}
 {M}'=
 \begin{cases}
 \{x^* \in M(x)
 \mid x^*(i) \le x(i)-1,\ x^*(j) \ge x(j)+1  \}
 & (\mbox{if }i, j \in N),\\
 \{x^* \in M(x) \mid x^*(i) \le x(i)-1 \}
 & (\mbox{if }i \in N,\ j=0),\\
 \{x^* \in M(x) \mid x^*(j) \ge x(j)+1  \}
 & (\mbox{if }i=0,\ j \in N).
 \end{cases}
 \end{equation*}
 {\rm (i)}
 There exists some $x^* \in M(x)$ that is contained in ${M}'$;
 we have ${M}' \ne \emptyset$, in particular.
 \\
 {\rm (ii)}
 It holds that
\begin{align*}
\mu(x - \chi_i + \chi_j) 
&= 
 \begin{cases}
 \mu(x)-2
 & (\mbox{if }i, j \in N),\\
 \mu(x)-1
 & (\mbox{if }i =0 \mbox{ or }j=0),
 \end{cases}
 \\
 M(x - \chi_i + \chi_j) &={M}'.
\end{align*} 
 
\medskip

However, neither (i) nor (ii) of 
Statement A is true for \Mnat-convex functions,
as shown by the following example.

\begin{example}\rm
\label{ex:geodesic-Mnat-L1}
 This  example  shows that Statement A is not true for \Mnat-convex functions.
 Consider a function $f: \Z^2 \to \Rinf$ given by
 \begin{align*}
 & \dom f = \{x \in \Z^2 \mid 0 \le x(1) \le 2,\ 0 \le x(2) \le 1\},\\
 & f(x) = 2 - x(1) \qquad (x \in \dom f)
 \end{align*}
 (see Figure \ref{fig:geo-example}),
for which  $\arg\min f = \{ (2,0),(2,1) \}$.
 Function $f$ satisfies the condition (\Mnat-EXC), and hence it is \Mnat-convex.
 \begin{figure}[t]
 \begin{center}
 \includegraphics[width=0.38\textwidth]{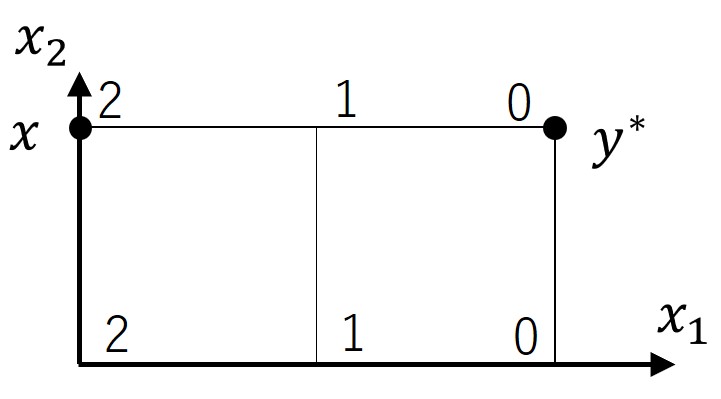}
 \caption{Values of function $f$
 in Example \ref{ex:geodesic-Mnat-L1}.}
 \label{fig:geo-example}
 \end{center}
 \end{figure}
 For $x =(0,1)$, we have
 \[
 \mu(x) = \|(2,1)- (0,1)\|_1 = 2, \qquad M(x) = \{(2,1)\}.
 \]
 We see that $(i,j)=(2,1)$ is a possible choice to minimize the value
 $f(x - \chi_i + \chi_j)$ among all $i,j \in \Nzero$.
 Then, we have
 \begin{align*}
 x' & = x - \chi_i + \chi_j = (1,0),
 \\
  {M}'
 & = M(x) \cap  \{x^*  \in \Z^2 \mid x^*(i) \le x(i)-1,\ x^*(j) \ge x(j)+1  \}\\
 & = \{(2,1)\}
 \cap \{x^* \in \Z^2 \mid x^*(2) \le 0,\ x^*(1) \ge 1  \} = \emptyset,
 \\
 \mu(x - \chi_i + \chi_j) 
 &= \|(2,0) -  (1,0)\|_1 = 1  \ne 0 =  \mu(x) - 2.
 \end{align*} 
 That is, Statement A does not hold for this \Mnat-convex function $f$.
\qed
\end{example}

 While Statement A fails for \Mnat-convex functions,
an alternative geodesic property 
holds for \Mnat-convex functions,
which can be obtained from
 the geodesic property for M-convex functions with respect to the L$_1$-norm
(see \cite[Corollary~4.2]{Shi22L1}, \cite[Theorem~2.4]{MS21steepM};
see also Theorem \ref{thm:geodesic-M} in Appendix).

 Let $f: \Z^n \to \Rinf$ be a function with $\arg\min f \ne \emptyset$.
 For $x \in \Z^n$, we define
\begin{align*}
\widetilde{\mu}(x) 
& =  \min\{\|x^* - x \|_1 + |x^*(N)-x(N)|  \mid x^* \in \arg\min f\},\\
\widetilde{M}(x) 
& =  \{x^* \in \Z^n \mid x^* \in \arg\min f, \  \|x^* - x \|_1  + |x^*(N)-x(N)|
= \widetilde{\mu}(x)  \}.
\end{align*}
That is, $\widetilde\mu(x)$ is a kind of distance from $x$ to
the nearest minimizer of $f$, and
$\widetilde{M}(x)$ is the set of the minimizers of $f$ nearest to $x$
with respect to this distance.

\begin{theorem}[{cf.~\cite[Corollary~4.2]{Shi22L1}, \cite[Theorem~2.4]{MS21steepM}}]
\label{thm:strong-min-cut-Mnat}
 Let $f: \Z^n \to \Rinf$ be an \Mnat-convex function with $\arg\min f \ne \emptyset$,
and $x \in \dom f$ be a vector that is not a minimizer of $f$,
i.e., $x \not\in \arg\min f$.
 Also, let $(i,j)$ be a pair of distinct elements in $\Nzero$
minimizing the value $f(x - \chi_i + \chi_j)$,
and define 
\begin{equation}
 \widetilde{M}'=
\begin{cases}
 \{x^* \in \widetilde{M}(x) 
\mid x^*(i) \le x(i)-1,\ x^*(j) \ge x(j)+1  \}
& (\mbox{if }i, j \in N),\\
 \{x^* \in \widetilde{M}(x)  \mid x^*(i) \le x(i)-1,\ 
x^*(N) \le x(N) -1 \}
& (\mbox{if }i \in N,\ j=0),\\
 \{x^* \in \widetilde{M}(x)  \mid x^*(j) \ge x(j)+1,\ 
x^*(N) \ge x(N) +1  \}
& (\mbox{if }i=0,\ j \in N).
 \end{cases}
\end{equation}
{\rm (i)}
 There exists some $x^* \in \widetilde{M}(x)$ that is contained 
in $\widetilde{M}'$;
we have $\widetilde{M}' \ne \emptyset$, in particular.
\\
{\rm (ii)}
It holds that
$\widetilde\mu(x - \chi_i + \chi_j) = 
\widetilde\mu(x)-2$
and
$\widetilde{M}(x - \chi_i + \chi_j) = \widetilde{M}'$.
\end{theorem}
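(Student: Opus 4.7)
The plan is to deduce Theorem~\ref{thm:strong-min-cut-Mnat} from the geodesic property for M-convex functions (Theorem~\ref{thm:geodesic-M} in the Appendix) via the standard lift between \Mnat-convexity and M-convexity. Concretely, I would introduce an M-convex function $\tilde f: \Z^{n+1} \to \Rinf$ with coordinates indexed by $\Nzero$,
\[
\tilde f(\tilde x) =
\begin{cases} f(\tilde x(1),\ldots,\tilde x(n)) & \text{if } \tilde x(0) + \tilde x(N) = 0, \\ +\infty & \text{otherwise,} \end{cases}
\]
so that the map $x \mapsto \tilde x := (-x(N),\, x(1), \ldots, x(n))$ is a value-preserving bijection $\dom f \to \dom \tilde f$, and hence $\arg\min f \to \arg\min \tilde f$. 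A one-line computation then yields
\[
\|\tilde x^* - \tilde x\|_1 = |x^*(N) - x(N)| + \|x^* - x\|_1,
\]
so the modified distance $\widetilde\mu(x)$ on $\Z^n$ equals the ordinary L$_1$-distance from $\tilde x$ to $\arg\min \tilde f$ in $\Z^{n+1}$, and $\widetilde M(x)$ corresponds under the lift to the set of L$_1$-nearest M-minimizers of $\tilde f$ to $\tilde x$.

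Next, I would transport the steepest-descent move. For distinct $i, j \in \Nzero$, the \Mnat-move $x \mapsto x - \chi_i + \chi_j$ lifts to the M-move $\tilde x \mapsto \tilde x - \chi_i + \chi_j$ on $\Z^{n+1}$, with the $\chi_k$'s now reinterpreted as unit vectors of $\Z^{n+1}$ (including a nonzero $\chi_0$): when $i, j \in N$ this is immediate, and when exactly one of $i, j$ equals $0$, the balance $\tilde x(0) + \tilde x(N) = 0$ forces the $0$-coordinate of $\tilde x$ to change by exactly $\pm 1$. Since the lift preserves $f$-values, a pair $(i, j)$ minimizes $f(x - \chi_i + \chi_j)$ over $\Nzero \times \Nzero$ if and only if it minimizes $\tilde f(\tilde x - \chi_i + \chi_j)$ over distinct $i, j \in \Nzero$. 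I would then apply Theorem~\ref{thm:geodesic-M} to $\tilde f$, $\tilde x$, and this pair $(i, j)$, obtaining an L$_1$-nearest minimizer $\tilde x^*$ to $\tilde x$ satisfying $\tilde x^*(i) \le \tilde x(i) - 1$ and $\tilde x^*(j) \ge \tilde x(j) + 1$, and with L$_1$-distance to $\tilde x - \chi_i + \chi_j$ equal to $\widetilde\mu(x) - 2$. Projecting $\tilde x^*$ back to $x^* = (\tilde x^*(1), \ldots, \tilde x^*(n))$ and substituting $\tilde x^*(0) = -x^*(N)$ and $\tilde x(0) = -x(N)$ converts every inequality on a coordinate with index in $N$ into the same inequality on $x^*$, and every inequality on $\tilde x^*(0)$ into the opposite inequality on $x^*(N)$. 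Matching the three cases $\{i, j\} \subseteq N$, $j = 0$, $i = 0$ against the definition of $\widetilde M'$ yields both parts~(i) and~(ii).

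I do not anticipate a serious obstacle, since the content of Theorem~\ref{thm:strong-min-cut-Mnat} is effectively packaged inside Theorem~\ref{thm:geodesic-M} via the lift. The only point requiring care is the case-by-case verification in the final step: one must check that the inequality on $\tilde x^*(0)$ returned by the M-version translates, via the sign flip $\tilde x^*(0) = -x^*(N)$, into precisely the inequality on $x^*(N)$ that appears in the definition of $\widetilde M'$ when $i = 0$ or $j = 0$, and that no spurious case is introduced by the added coordinate.
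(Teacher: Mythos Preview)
Your proposal is correct and follows exactly the route the paper indicates: the paper does not write out a proof but states that Theorem~\ref{thm:strong-min-cut-Mnat} ``can be obtained from the geodesic property for M-convex functions with respect to the L$_1$-norm'' (Theorem~\ref{thm:geodesic-M}) via the standard lift $\tilde f$ in~\eqref{eqn:def-tildef}, which is precisely what you carry out. Your identification of $\widetilde\mu(x)$ with the L$_1$-distance in the lifted space and the case-by-case translation of the coordinate $0$ inequalities back to inequalities on $x^*(N)$ are the only nontrivial bookkeeping, and you have them right.
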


 Note that the statement (i) immediately implies
the minimizer cut property (Theorem \ref{thm:min-cut-Mnat})
for \Mnat-convex functions.
 The statement (ii)
implies that in each iteration of Algorithm {\sc BasicSteepestDescent}
applied to an \Mnat-convex function,
the distance $\widetilde{\mu}(x)$ to the nearest minimizer reduces by two.
 This fact yields an exact number of iterations
required by {\sc BasicSteepestDescent}.

\begin{corollary}
 Let $f: \Z^n \to \Rinf$ be 
an \Mnat-convex function  with $\arg\min f \ne \emptyset$.
 Suppose that 
Algorithm {\sc BasicSteepestDescent} is applied to $f$
with the initial vector $x_0 \in \dom f$.
 Then, the number of iterations is equal to $\tilde{\mu}(x_0)/2$.
\end{corollary}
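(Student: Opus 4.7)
The plan is to use Theorem~\ref{thm:strong-min-cut-Mnat}(ii) as the main engine: each iteration of {\sc BasicSteepestDescent} reduces the quantity $\widetilde{\mu}(x)$ by exactly $2$, so counting iterations amounts to dividing the initial value $\widetilde{\mu}(x_0)$ by $2$.

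More concretely, I would first observe that the algorithm terminates at some vector $x_{\rm final}$ satisfying \eqref{eqn:loc-min-global-min2}, and hence, by Theorem~\ref{thm:Mminimizer-Mnat}, $x_{\rm final} \in \arg\min f$. Taking $x^* = x_{\rm final}$ in the definition of $\widetilde{\mu}$ gives $\widetilde{\mu}(x_{\rm final}) = 0$. Next, consider any single iteration in which $x$ is updated to $x' = x - \chi_i + \chi_j$, where $(i,j)$ is the pair chosen in Step~2. Regardless of which of the three cases of Theorem~\ref{thm:strong-min-cut-Mnat} applies, part~(ii) of that theorem yields $\widetilde{\mu}(x') = \widetilde{\mu}(x) - 2$. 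Summing over all iterations, if $T$ denotes the total number of iterations, one obtains $\widetilde{\mu}(x_0) - 2T = \widetilde{\mu}(x_{\rm final}) = 0$, which gives $T = \widetilde{\mu}(x_0)/2$ as claimed.

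A small point that should be addressed for the statement to even make sense is that $\widetilde{\mu}(x_0)$ is necessarily even; this follows from the elementary parity identity $\sum_{i} |d_i| \equiv \bigl|\sum_{i} d_i\bigr| \pmod{2}$ applied to $d_i = x^*(i) - x_0(i)$, whence $\|x^* - x_0\|_1 + |x^*(N) - x_0(N)|$ is even for any $x^* \in \arg\min f$. The main (and essentially only) obstacle is thus packaged into Theorem~\ref{thm:strong-min-cut-Mnat}(ii), which is the nontrivial geodesic statement; once that is available, the present corollary reduces to a one-line telescoping argument combined with the termination characterization from Theorem~\ref{thm:Mminimizer-Mnat}.
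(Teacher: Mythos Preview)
Your proposal is correct and follows exactly the approach the paper intends: the paper states just before the corollary that ``statement (ii) implies that in each iteration of Algorithm {\sc BasicSteepestDescent} \ldots\ the distance $\widetilde{\mu}(x)$ to the nearest minimizer reduces by two,'' and leaves the telescoping to the reader. Your write-up fills in precisely these details (termination via Theorem~\ref{thm:Mminimizer-Mnat}, the per-step decrement from Theorem~\ref{thm:strong-min-cut-Mnat}(ii), and the parity observation), so there is nothing to add or correct.
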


 In contrast, s.s.\;quasi \Mnat-convex functions do not enjoy
the  geodesic property in the form of Theorem~\ref{thm:strong-min-cut-Mnat},
as illustrated in the following example.

\begin{example}\rm 
\label{ex:geodesic-qMnat-1}
 This example shows that the
statement of Theorem \ref{thm:strong-min-cut-Mnat}
is not true for s.s.\;quasi \Mnat-convex functions
in the case of $i=0$ or $j=0$.
 Consider the s.s.\;quasi \Mnat-convex function $f: \Z^3 \to \Rinf$ 
in Example \ref{ex:min-cut-qMnat-1} (see  Figure \ref{fig:geodesic-example2}),
for which $\arg\min f = \{ (2,1,0),(2,0,1) \}$.
 For $x =(0,1,2)$,
we have
\[
 \widetilde{\mu}(x) = \|(2,1,0)- (0,1,2)\|_1
 = \|(2,0,1)- (0,1,2)\|_1 = 4, \quad
\widetilde{M}(x) = \{(2,1,0), (2,0,1)\}.
\]
 For $(i,j) = (2,0)$, we have $x - \chi_i + \chi_j=(0,0,2)$ 
and 
 \[
 f(x - \chi_i + \chi_j) = 2 = \min_{i',j'\in \Nzero} f(x - \chi_{i'} + \chi_{j'}).
 \]
 However, we have
\begin{align*}
 \widetilde{M}'
& = \widetilde{M}(x) \cap  
\{x^*  \in \Z^3 \mid x^*(i) \le x(i)-1,\ x^*(N) \le x(N) -1 \}\\
& = \{(2,1,0), (2,0,1)\} \cap 
 \{x^* \in \Z^3 \mid x^*(2) \le 0,\ x^*(N) \le 2 \}
 = \emptyset, 
\\
\widetilde{\mu}(x - \chi_i + \chi_j)
&= \|(2,0,1)- (0,0,2)\|_1 = 3 \ne 2 = \widetilde{\mu}(x) -2.
\end{align*}
Thus, the statements (i) and (ii)
in Theorem \ref{thm:strong-min-cut-Mnat} fail for $f$.
\qed
\end{example}


\subsection{Proximity Property} 
\label{sec:proximity}

 Given a function $f: \Z^n \to \Rinf$, $\hat{x} \in \dom f$,
and an integer $\alpha \ge 2$, we consider the following scaled
minimization problem:
\[
 \mbox{(SP) \quad Minimize }\quad  f(x) \qquad \mbox{ subject to } \quad
x= \hat{x} + \alpha y,\ y \in \Z^n.
\]
 It is expected that
an appropriately chosen neighborhood of a global (or local) optimal solution
of the scaled minimization problem (SP) contains
some minimizer of $f$; such a property is referred to as 
a proximity property in this paper.
 It is known that M-convex functions enjoy
a proximity property \cite[Theorem 3.4]{MMS02}
(see also Theorem \ref{thm:proxmity-M} in Appendix),
which can be rewritten in terms of \Mnat-convex functions 
as follows:

\begin{theorem}[{cf.~\cite[Theorem 3.4]{MMS02}}]
\label{thm:proxmity-Mnat}
 Let $f: \Z^n \to \Rinf$ be an \Mnat-convex function 
and $\alpha\ge 2$ be an integer.
 For every vector ${x} \in \dom f$  satisfying
\begin{equation*}
  f({x})  \le 
 \min\Big[
 \min_{i\in N}f({x}\pm \alpha \chi_i),\ 
 \min_{i,j \in N}f({x} - \alpha (\chi_i -  \chi_j))
 \Big],
\end{equation*} 
 there exists some minimizer $x^*$ of $f$ satisfying
\[
 \|x^* - {x} \|_\infty \le n (\alpha -1), \qquad
|x^*(N) - {x}(N)| \le n (\alpha -1).
\]
\end{theorem}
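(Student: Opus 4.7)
The plan is to derive Theorem \ref{thm:proxmity-Mnat} from the M-convex proximity result (Theorem \ref{thm:proxmity-M} in the Appendix) via the standard lifting that turns an \Mnat-convex function on $\Z^n$ into an M-convex function on $\Z^{n+1}$. Concretely, I would introduce $\tilde f: \Z^{n+1} \to \Rinf$ on the ground set $\Nzero$ defined by
\[
\tilde f(x_0, x_1, \ldots, x_n) =
\begin{cases}
f(x_1, \ldots, x_n) & \text{if } x_0 + \sum_{i=1}^{n} x_i = 0,\\
+\infty & \text{otherwise,}
\end{cases}
\]
which is M-convex precisely because $f$ is \Mnat-convex, and set $\tilde x = (-x(N), x_1, \ldots, x_n) \in \dom \tilde f$ so that $\tilde f(\tilde x) = f(x)$.

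The core observation is that the three families of test points in the hypothesis of Theorem \ref{thm:proxmity-Mnat} correspond, under the lift, to the single family of all $\alpha$-scaled nearest-neighbor moves of $\tilde f$ at $\tilde x$. Namely, for $i \in N$ the point $x + \alpha \chi_i$ lifts to $\tilde x - \alpha \chi_0 + \alpha \chi_i$, the point $x - \alpha \chi_i$ lifts to $\tilde x + \alpha \chi_0 - \alpha \chi_i$, and for $i, j \in N$ the point $x - \alpha(\chi_i - \chi_j)$ lifts to $\tilde x - \alpha \chi_i + \alpha \chi_j$. Hence the hypothesis on $f$ is equivalent to
\[
\tilde f(\tilde x) \le \tilde f(\tilde x - \alpha \chi_i + \alpha \chi_j) \qquad (i, j \in \Nzero,\ i \ne j),
\]
which is precisely the scaled local-minimality hypothesis required by the M-convex proximity theorem on the ground set $\Nzero$ of cardinality $n+1$.

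Applying Theorem \ref{thm:proxmity-M} to $\tilde f$ at $\tilde x$ then yields some $\tilde x^* = (x_0^*, x_1^*, \ldots, x_n^*) \in \argmin \tilde f$ with $\|\tilde x^* - \tilde x\|_\infty \le ((n+1)-1)(\alpha-1) = n(\alpha-1)$. Projecting out the $0$-th coordinate, $x^* := (x_1^*, \ldots, x_n^*)$ lies in $\argmin f$ because $\tilde f$ and $f$ agree on $\dom \tilde f$. Restricting the $\infty$-norm bound to coordinates in $N$ gives $\|x^* - x\|_\infty \le n(\alpha-1)$, while the bound on the $0$-th coordinate gives $|x^*(N) - x(N)| = |x_0^* - \tilde x(0)| \le n(\alpha-1)$, since $x_0^* = -x^*(N)$ and $\tilde x(0) = -x(N)$.

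I expect the only delicate step to be the hypothesis translation in the middle paragraph: verifying that the three different-looking inequality families on $f$ collapse into the uniform family on $\tilde f$ over the extended ground set $\Nzero$, and that no auxiliary hypothesis is smuggled in. Everything else is routine bookkeeping that invokes the appendix proximity theorem as a black box.
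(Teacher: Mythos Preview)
Your proof is correct and is precisely the approach the paper intends: the paper does not give a standalone proof but simply states that the M-convex proximity result (Theorem~\ref{thm:proxmity-M}) ``can be rewritten in terms of \Mnat-convex functions,'' and your argument via the lift $\tilde f$ on $\Z^{\Nzero}$ is exactly that rewriting, with the ground-set size $n+1$ correctly yielding the bound $n(\alpha-1)$.
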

\noindent
 This theorem, in particular, implies that
for every optimal solution ${x}$ of {\rm (SP)},
 there exists some minimizer $x^*$ of $f$ such that
$\|x^* - {x} \|_\infty \le n (\alpha -1)$.

 In contrast, a proximity property of this form
does not hold for s.s.\;quasi \Mnat-convex functions.
 Indeed, the following example shows that there exists a family of
s.s.\;quasi \Mnat-convex functions such that
the distance between an approximate global minimizer $\hat{x}$
and a unique exact global minimizer $x^*$ can be arbitrarily large.

\begin{example}\rm
\label{ex:proximity}
 This example shows a function  satisfying
{\SSQMn} for which
the statement of Theorem \ref{thm:proxmity-Mnat} does not hold.
 With an integer $k \ge 2$,
define a function $f: \Z^3 \to \Rinf$ as follows
(see Figure \ref{fig:func} for the case of $k=3$):
 \begin{align*}
 &  \dom f = \{x \in \Z^3 \mid
 0 \le x_1 \le k,\ 0 \le x_2 \le 1,\ 0 \le x_3 \le 1  \},\\
 & f(\lambda, 0,0) = 0 \qquad (0 \le \lambda \le k),\\
 & f(\lambda, 1,0) = f(\lambda, 0,1) = \lambda-k-1 \qquad (0 \le \lambda \le k),\\
 & f(\lambda, 1,1) = 2(\lambda-k-1) \qquad (0 \le \lambda \le k).
 \end{align*}
 It can be verified that $f$ satisfies the condition {\SSQMn}.
 Note that the value $f(\lambda,0,0)$ is constant
for every $\lambda$ with $0 \le \lambda \le k$,
while $f(\lambda,1,0), f(\lambda,0,1)$, and $f(\lambda,1,1)$
are strictly increasing with respect to $\lambda$
in the interval $0 \le \lambda \le k$.
 We also see that $f$ has a unique minimizer $y^*=(0,1,1)$. 
 \begin{figure}[t]
 \begin{center}
 \includegraphics[width=0.6\textwidth]{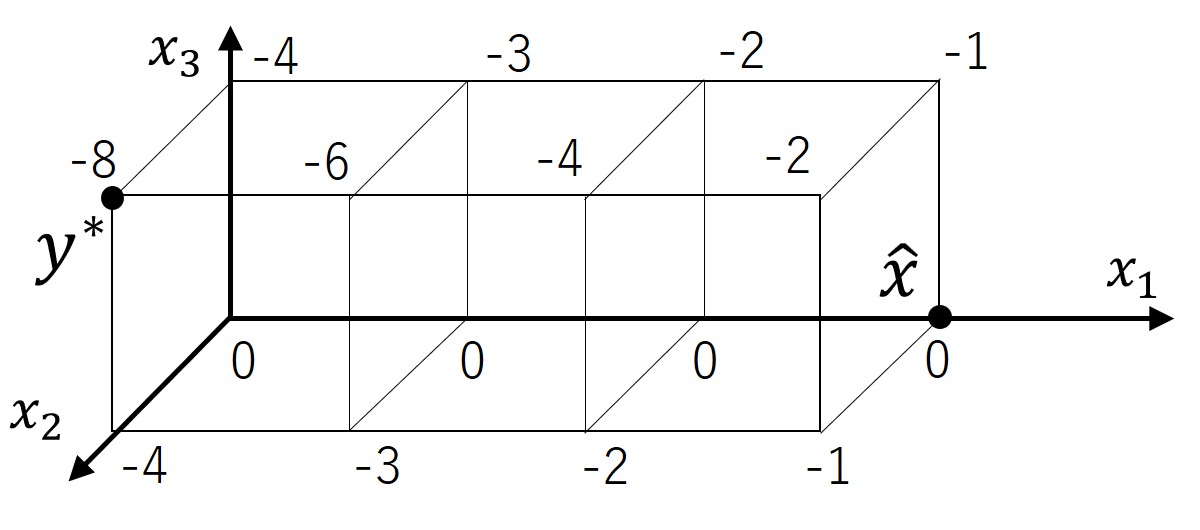}
 \caption{Values of function $f$
 in Example \ref{ex:proximity} with $k=3$.}
 \label{fig:func}
 \end{center}
 \end{figure}
For the problem (SP) with $\hat{x} = (k,0,0)$ and $\alpha = 2$,
$\hat{x}$ itself is an optimal solution.
 The distance between the unique minimizer $y^*$ of $f$
and $\hat{x}$ is $\|\hat{x} - y^* \|_\infty = k$,
which can be arbitrarily large by taking a sufficiently large $k$.
\qed
\end{example}


\section{Proofs}
\label{sec:proof}

\subsection{Minimizer Cut Property}
\label{sec:proof-min-cut}

 We derive Theorem \ref{thm:min-cut-qMnat} from the following
variants of the minimizer cut property for
s.s.\;quasi \Mnat-convex functions.

\begin{theorem}
\label{thm:min-cut-qMnat-2}
 Let $f: \Z^n \to \Rinf$ be a function 
with {\SSQMn} satisfying $\arg\min f \ne \emptyset$,
and $x \in \dom f$.
\\
{\rm (i)}
Let $i \in N$ and suppose that the minimum of
$f(x - \chi_{i} + \chi_{j'})$ over $j' \in \Nzero$
is attained by some $j \in N$ $(j \ne 0)$.
 Then,
there exists some minimizer $x^*$ of $f$ satisfying
\begin{equation*}
\begin{cases}
x^*(j) \ge x(j)+1 
& (\mbox{if }j \in N \setminus \{i\}),\\
x^*(i) \ge x(i) 
& (\mbox{if }j =i).
 \end{cases}
\end{equation*}
{\rm (ii)}
Symmetrically to {\rm (i)},
let $j \in N$ and suppose that the minimum of
$f(x - \chi_{i'} + \chi_{j})$ over $i' \in \Nzero$
is attained by some $i \in N$ $(i \ne 0)$.
Then, there exists some minimizer $x^*$ of $f$ satisfying
\begin{equation*}
\begin{cases}
x^*(i) \le x(i)-1 
& (\mbox{if }i \in N \setminus \{j\}),\\
x^*(j) \le x(j) 
& (\mbox{if }i =j).
 \end{cases}
\end{equation*}
{\rm (iii)}
 Suppose that the minimum of
$f(x  + \chi_{j'})$ over $j' \in \Nzero$
is attained by some $j \in N$ $(j \ne 0)$.
 Then,
there exists some minimizer $x^*$ of $f$ satisfying
$x^*(j) \ge x(j)+1$.
\\
{\rm (iv)}
Symmetrically to {\rm (iii)},
suppose that the minimum of
$f(x  - \chi_{i'})$ over $i' \in \Nzero$
is attained by some $i \in N$ $(i \ne 0)$.
Then, 
there exists some minimizer $x^*$ of $f$ satisfying
$x^*(i) \le x(i)-1$.
\end{theorem}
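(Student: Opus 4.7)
The plan is to prove the four parts in order, exploiting the symmetry between (i)/(ii) and between (iii)/(iv): parts (ii) and (iv) follow from (i) and (iii) by reversing the roles of $\suppp$ and $\suppm$ in the \SSQMn axiom (equivalently, by applying the axiom to the reversed pair $(y,x)$ rather than $(x,y)$), so it suffices to give detailed arguments for (i) and (iii). I would start with (iii), which is the cleanest since the hypothesis involves no $-\chi_i$ term and the exchange vectors to compare are simply $x + \chi_{j'}$.

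For part (iii), I argue by contradiction. Assume no minimizer $x^*$ satisfies $x^*(j) \ge x(j) + 1$, and pick $x^* \in \arg\min f$ that first maximizes $x^*(j)$ and, among such minimizers, minimizes $\|x - x^*\|_1$. The trivial case $x \in \arg\min f$ is handled directly: the hypothesis $f(x + \chi_j) \le f(x)$ forces equality, so $x + \chi_j$ is itself a minimizer with $j$-coordinate $x(j) + 1$, contradicting the assumption. Otherwise I apply \SSQMn to $(x, x^*)$ with a first index $k \in \suppp(x - x^*)$, obtaining some $\ell \in \suppm(x - x^*) \cup \{0\}$ that falls into one of three branches: strict decrease at $x$, strict decrease at $x^*$ (impossible by minimality), or equality on both sides. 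In the equality branch the new point $x^* + \chi_k - \chi_\ell$ is a minimizer that either has strictly larger $j$-coordinate (when $k = j$) or lies strictly closer to $x$ in the L$_1$-norm (otherwise), in both cases contradicting the extremality of $x^*$. In the strict-decrease-at-$x$ branch, a suitable choice of $k$ combined with the hypothesis that $j$ minimizes $f(x + \chi_{j'})$ yields a forbidden inequality of the form $f(x + \chi_{j'}) < f(x + \chi_j)$. The argument for (i) with $j \in N \setminus \{i\}$ follows the same template, applied to $x - \chi_i$ (which belongs to $\dom f$ by hypothesis) in place of $x$. In the subcase $j = i$ of (i), the hypothesis collapses to $f(x) \le f(x - \chi_i + \chi_{j'})$ for all $j'$, and the same three-way case analysis goes through by selecting a minimizer that maximizes $x^*(i)$.

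The main obstacle is the weakness of \SSQMn compared to \Mnat-EXC: the additional equality branch blocks a straightforward descent argument and has to be neutralized by the tie-breaking rule in the choice of extremal minimizer. Calibrating which secondary key to use in each part is the delicate step — $\|x - x^*\|_1$ suffices for (iii) and (iv), while for (i) and (ii) the tie-breaker must interact correctly with the already-consumed exchange at coordinate $i$. A second, minor subtlety is that \SSQMn admits only indices in $\suppp(x - y) \subseteq N$ as the first argument, so when the hypothesis index $j$ (respectively $i$) must be fed into the axiom we are forced to apply it to the reversed pair $(x^*, x)$ and mirror the case analysis accordingly.
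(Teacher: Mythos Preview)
There is a genuine gap in the strict-decrease-at-$x$ branch of your argument for (iii). You apply \SSQMn\ to the pair $(x,x^{*})$ with a first index $k\in\suppp(x-x^{*})$, obtaining some $\ell\in\suppm(x-x^{*})\cup\{0\}$. If the outcome is $f(x-\chi_{k}+\chi_{\ell})<f(x)$, you claim that ``a suitable choice of $k$'' converts this into $f(x+\chi_{j'})<f(x+\chi_{j})$ for some $j'$. But no such $k$ exists: any admissible $k$ lies in $N$, so the left-hand side always carries a $-\chi_{k}$ term and can never equal $f(x+\chi_{j'})$ for any $j'\in\Nzero$. The hypothesis that $j$ minimizes $f(x+\chi_{j'})$ says nothing about values of the form $f(x-\chi_{k}+\chi_{\ell})$, and this branch simply does not close. (Reversing to $(x^{*},x)$ does not help either: the analogous branch then reads $f(x+\chi_{k'}-\chi_{\ell'})<f(x)$, which again fails to contradict the hypothesis unless $\ell'=0$, and even then you only get $f(x+\chi_{j})\le f(x+\chi_{k'})<f(x)$, which is no contradiction.)

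The paper's remedy is to apply the exchange axiom not at $x$ but at $x'=x+\chi_{j}$ (respectively $x'=x-\chi_{i}+\chi_{j}$ for part (i)); this point is in $\dom f$ because the minimum is attained at $j$. The contradiction hypothesis $x^{*}(j)\le x(j)$ gives $j\in\suppp(x'-x^{*})$, so one may feed $j$ itself as the first index. Now the strict-decrease-at-$x'$ branch reads $f(x'-\chi_{j}+\chi_{r})<f(x')$, i.e.\ $f(x+\chi_{r})<f(x+\chi_{j})$, contradicting the hypothesis directly. With this device the single extremality key ``maximize $x^{*}(j)$'' already dispatches the equality branch (since $r\ne j$ forces $(x^{*}+\chi_{j}-\chi_{r})(j)=x^{*}(j)+1$), and no secondary $\|x-x^{*}\|_{1}$ tie-break is needed. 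A minor further issue: your reduction of (i) to (iii) via $\tilde{x}=x-\chi_{i}$ asserts $\tilde{x}\in\dom f$ ``by hypothesis'', but the hypothesis only guarantees $x-\chi_{i}+\chi_{j}\in\dom f$; the paper works directly with $x'=x-\chi_{i}+\chi_{j}$ and avoids this problem.
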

\noindent
 While postponing the proof of Theorem \ref{thm:min-cut-qMnat-2},
we first give a proof of Theorem \ref{thm:min-cut-qMnat}.

\begin{proof}[Proof of Theorem \ref{thm:min-cut-qMnat}]
 We first consider the case of $j \in N$ (i.e., $j \ne 0$ and $i \in \Nzero$).
 By the choice of $j$, we have 
$f(x - \chi_i + \chi_j) = \min_{j' \in \Nzero}
f(x - \chi_i + \chi_{j'})$. 
 Hence, we can apply Theorem \ref{thm:min-cut-qMnat-2} (i) 
(if $i \ne 0$)
or Theorem \ref{thm:min-cut-qMnat-2} (iii) (if $i =0$) to obtain
some $x^* \in \arg\min f$ such that $x^*(j) \ge x(j)+1$.
 If $i = 0$ then we are done since $x^*$ satisfies 
the desired condition~\eqref{eqn:mincut-cond-v}.

  We consider the case $i\ne 0$.
  Let $\tilde{f}: \Z^n \to \Rinf$ be
the restriction of $f$ to
  $D = \{y \in \Z^n \mid y(j) \ge x(j) + 1\}$.
Since
$x^* \in D \cap \arg\min f$,
it holds that
\[
\min \tilde{f} = \min f, \qquad
\arg\min \tilde{f} \subseteq \arg\min f.
\]
 We can check easily that $\tilde{f}$ satisfies {\SSQMn}
and $i \in N$ satisfies
\[
\tilde{f}(x - \chi_i + \chi_j)
 = \min_{i' \in \Nzero} \tilde{f}(x - \chi_{i'} + \chi_{j}).
\]
 Hence, we can apply Theorem \ref{thm:min-cut-qMnat-2} (ii) to obtain 
some minimizer $x^{**}$ of $\tilde{f}$ satisfying
$x^{**}(i) \le x(i)-1$.
 This vector $x^{**}$ satisfies 
$x^{**}(i) \le x(i)-1$ and $x^{**}(j) \ge x(j)+1$ as
desired in \eqref{eqn:mincut-cond-v}.
 Note that $x^{**}$ is a minimizer of $f$ because 
$x^{**} \in \arg\min \tilde{f} \subseteq \arg\min f$.

 The remaining case (i.e., $i \in N$ and $j = 0$)
can be treated similarly
by using Theorem~\ref{thm:min-cut-qMnat-2}~(iv).
\end{proof}

 We now give a proof of Theorem \ref{thm:min-cut-qMnat-2}.

\begin{proof}[Proof of Theorem \ref{thm:min-cut-qMnat-2}]
 In the following, we give proofs of (i) and (iii);
proofs of 
(ii) and (iv) can be obtained by applying (i)
and (iii) to $g(x) = f(-x)$, 
respectively.

[Proof of (i)] \quad
 Put $x' =x - \chi_i + \chi_j$.
 It suffices to show that $x^*(j) \ge x'(j)$ holds
for some $x^* \in \arg\min f$.
 Let $x^*$ be a vector in $\arg\min f$
that maximizes the value $x^*(j)$.
 If $x^*$ satisfies $x^*(j) \ge x'(j)$, then we are done.
 Hence, we assume, to the contrary, that
$x^*(j) < x'(j)$, and derive a contradiction.

 The condition {\SSQMn} applied to
$x'$, $x^*$, and $j \in \suppp(x' - x^*)$ implies that
there exists some  $r \in \suppm(x' - x^*)\cup\{0\}$ such that
\begin{align}
&  f(x^*) >  f(x^* + \chi_j - \chi_r),
\quad \mbox{ or } 
\label{eqn:SSQMcond2-1}\\
& f(x') >  f(x' - \chi_j + \chi_r),
\quad \mbox{ or } 
\label{eqn:SSQMcond2-2}\\
 & f(x^*) =  f(x^* + \chi_j - \chi_r)
 \mbox{ and } 
f(x') =  f(x' - \chi_j + \chi_r).
\label{eqn:SSQMcond2-3}
\end{align}
 Note that $r \ne j$ holds.
 Since $x^* \in \arg\min f$, we have
\begin{equation}
\label{eqn:mincut-9} 
  f(x^*) \le  f(x^* + \chi_j - \chi_r).
\end{equation}
 By the choice of $j$ and $x' =x - \chi_i + \chi_j$, we have
\begin{equation}
\label{eqn:mincut-10} 
 f(x') \le  f(x - \chi_i + \chi_r) = f(x' - \chi_j + \chi_r),
\end{equation}
where $x - \chi_i + \chi_r = x' - \chi_j + \chi_r$.
 The inequalities \eqref{eqn:mincut-9} and \eqref{eqn:mincut-10} 
exclude the possibilities of \eqref{eqn:SSQMcond2-1} and \eqref{eqn:SSQMcond2-2},
respectively.
 Therefore, we have \eqref{eqn:SSQMcond2-3}.
 The former equation in \eqref{eqn:SSQMcond2-3}
implies that $x^* + \chi_j - \chi_r$ is also a minimizer of $f$,
a contradiction to the choice of $x^*$
since $(x^* + \chi_j - \chi_r)(j)  = x^*(j)+1 > x^*(j)$.

[Proof of (iii)] \quad
 The proof given below is similar to that for (i).
 Put $x' =x  + \chi_j$.
 It suffices to show that $x^*(j) \ge x'(j)$ holds
for some $x^* \in \arg\min f$.
 Let $x^*$ be a vector in $\arg\min f$
that maximizes the value $x^*(j)$.
 If $x^*$ satisfies $x^*(j) \ge x'(j)$, then we are done.
 Hence, we assume, to the contrary, that
$x^*(j) < x'(j)$, and derive a contradiction.

 The condition {\SSQMn} applied to
$x'$, $x^*$, and $j \in \suppp(x' - x^*)$ implies that
there exists some  $r \in \suppm(x' - x^*)\cup\{0\}$ such that
\begin{align}
&  f(x^*) >  f(x^* + \chi_j - \chi_r),
\quad \mbox{ or } 
\label{eqn:SSQMcond2-1-ii}\\
& f(x') >  f(x' - \chi_j + \chi_r),
\quad \mbox{ or } 
\label{eqn:SSQMcond2-2-ii}\\
 & f(x^*) =  f(x^* + \chi_j - \chi_r)
 \mbox{ and } 
f(x') =  f(x' - \chi_j + \chi_r).
\label{eqn:SSQMcond2-3-ii}
\end{align}
 Note that $r \ne j$ holds.
 Since $x^* \in \arg\min f$, we have
\begin{equation}
\label{eqn:mincut-9-ii} 
  f(x^*) \le  f(x^* + \chi_j - \chi_r).
\end{equation}
 By the choice of $j$ and $x' =x + \chi_j$, we have
\begin{equation}
\label{eqn:mincut-10-ii} 
 f(x') \le  f(x + \chi_r) = f(x' - \chi_j + \chi_r),
\end{equation}
where $x + \chi_r = x' - \chi_j + \chi_r$.
 The inequalities \eqref{eqn:mincut-9-ii} and \eqref{eqn:mincut-10-ii} 
exclude the possibilities of \eqref{eqn:SSQMcond2-1-ii} and \eqref{eqn:SSQMcond2-2-ii},
respectively.
 Therefore, we have \eqref{eqn:SSQMcond2-3-ii}.
 The former equation in \eqref{eqn:SSQMcond2-3-ii}
implies that $x^* + \chi_j - \chi_r$ is also a minimizer of $f$,
a contradiction to the choice of $x^*$
since $(x^* + \chi_j - \chi_r)(j)  = x^*(j)+1 > x^*(j)$.
\end{proof}


\subsection{Domain Reduction Algorithm}
\label{sec:domain_reduction}

 To prove Corollary \ref{cor:domain-reduction-quasiMnat},
we need to explain the general framework of the domain reduction 
algorithm for minimization
of a function $f: \Z^n \to \Rinf$ with bounded $\dom f$.
  For a nonempty bounded set $S \subseteq \Z^n$,
we denote
\begin{align*}
& \ell(S; i) = \min\{x(i) \mid x \in S \},
\qquad u(S; i) = \max\{x(i) \mid x \in S \} \qquad (i \in N).
\end{align*}
 We define the \textit{peeled set} $\widehat{S}$ of $S$
by
\begin{align*}
 \widehat{S} 
&= \{x \in S \mid \ell'(i) \le x(i) \le u'(i) \ (i \in N)\},\\
 \ell'(i) &= (1-1/n)\ell(S;i) + (1/n)u(S;i) \qquad (i \in N), 
\\
u'(i) & = (1/n)\ell(S;i) + (1-1/n)u(S;i) \qquad (i \in N).
\end{align*}

 The outline of the algorithm is described as follows.

\begin{flushleft}
 \textbf{Algorithm} {\sc DomainReduction}
\\
 \textbf{Step 0:} 
 Let $B:= \dom f$.
\\
 \textbf{Step 1:} 
 Find a vector $x$ in the peeled set $\widehat{B}$.
\\
 \textbf{Step 2:} 
 If $x$ is a minimizer of $f$, then output $x$ and stop.
\\
 \textbf{Step 3:} 
 Find an axis-orthogonal hyperplane $y(i) = \alpha$ 
with some $i \in N$ and $\alpha \in \Z$
	  such that 
\begin{quote}
\qquad   (Case 1) 
 $\arg\min f \cap \{y \mid y(i) \ge \alpha \} \ne \emptyset$
 and $x \notin \{y \mid y(i) \ge \alpha \}$
 \quad or \\
\qquad (Case 2) 
 $\arg\min f \cap \{y \mid y(i) \le \alpha \} \ne \emptyset$
 and $x \notin \{y \mid y(i) \le \alpha \}$.
\end{quote}
 \textbf{Step 4:} 
 Set 
\[
 B := 
\begin{cases}
B \cap \{y \mid y(i) \ge \alpha\} 
& (\mbox{Case 1}),
\\
B \cap \{y \mid y(i) \le \alpha\} 
& (\mbox{Case 2})
\end{cases}
\]
\phantom{\textbf{Step 4:}\ }
and go to Step 1.
\end{flushleft}

The algorithm successfully finds a minimizer of $f$ if the following are true 
in each iteration:
\begin{quote}
{(DR1)}
the peeled set $\widehat{B}$ in Step 1 is nonempty, and
\\
{(DR2)}
there exists a hyperplane satisfying the desired condition 
in Step 3.
\end{quote}
These conditions hold indeed 
if $f$ is an s.s.\;quasi \Mnat-convex function,
as we show later (after Lemma \ref{lem:domain_reduction}).

The number of iterations of the algorithm {\sc DomainReduction}
can be analyzed as follows.

\begin{lemma}[{cf.~\cite[Section 3]{Shi98min}}]
\label{lem:domain_reduction}
 If the conditions {\rm (DR1)} and {\rm (DR2)}
are satisfied in each iteration of {\sc DomainReduction},
then the algorithm terminates in $O(n^2 \log L)$ iterations
with $L = L_\infty(\dom f)$.
\end{lemma}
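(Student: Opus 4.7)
The plan is to bound the iteration count via a potential-function argument. I will introduce a nonnegative potential $\Phi(B)$ on the current box $B$ that starts at $O(n \log L)$, terminates at $0$, and shrinks by at least $\Omega(1/n)$ per iteration; the stated bound $O(n^2 \log L)$ then follows by dividing.

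The natural choice is $\Phi(B) = \sum_{i \in N} \log\bigl(u(B;i) - \ell(B;i) + 1\bigr)$, so that $\Phi(\dom f) \le n \log(L+1) = O(n \log L)$ initially, and $\Phi(B) = 0$ exactly when $B$ is a singleton, at which point Step 2 must halt (by the invariant that $\arg\min f \cap B$ remains nonempty, which is precisely what condition (DR2) guarantees can be maintained). The heart of the argument, and the step I expect to require the most care, is the per-iteration shrinkage estimate. Because the vector $x$ chosen in Step 1 lies in the peeled set $\widehat{B}$, it satisfies $\ell'(i) \le x(i) \le u'(i)$ for every $i \in N$, and here
\[
\ell'(i) - \ell(B;i) \;=\; u(B;i) - u'(i) \;=\; \tfrac{1}{n}\bigl(u(B;i) - \ell(B;i)\bigr).
\]
In Case 1 of Step 3 the cut $y(i) \ge \alpha$ satisfies $\alpha > x(i) \ge \ell'(i)$, so the new box $B'$ satisfies $\ell(B';i) \ge \alpha > \ell'(i)$, and a short calculation yields
\[
u(B';i) - \ell(B';i) \;<\; u(B;i) - \ell'(i) \;=\; \bigl(1 - \tfrac{1}{n}\bigr)\bigl(u(B;i) - \ell(B;i)\bigr);
\]
Case 2 is symmetric. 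Using the elementary inequality $-\log(1 - 1/n) > 1/n$, one concludes $\Phi(B) - \Phi(B') > 1/n$ at every iteration.

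Combining the two bounds immediately gives the desired $O(n^2 \log L)$ iteration count. The main point to handle with care is tracking that the peeled set is recomputed relative to the \emph{current} $B$ at every iteration, so that the $(1-1/n)$-shrinkage applies afresh each time; this is bookkeeping but is essential. Otherwise the argument is routine, with (DR1) used only to guarantee that Step 1 can always pick an $x$ and (DR2) only to keep Step 3 well-defined and preserve the invariant $\arg\min f \cap B \ne \emptyset$.
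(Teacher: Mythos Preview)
Your argument is correct and rests on the same core observation as the paper's proof: because $x$ lies in the peeled set $\widehat{B}$, an iteration that cuts on coordinate $i$ shrinks the width $u(B;i)-\ell(B;i)$ by a factor of $(1-1/n)$. The paper packages this as a per-coordinate count (at most $O(n\log L)$ iterations of each type $i$, summed over $n$ types) rather than a global potential, but the two bookkeeping schemes are equivalent. One small wrinkle to tidy: since your potential uses $\log(w_i+1)$ rather than $\log w_i$, the inequality $w_i' < (1-1/n)w_i$ only yields a per-step drop of $\Omega(1/n)$ (e.g., at least $1/(2n)$, using $w_i\ge 1$ whenever coordinate $i$ is cut), not strictly $>1/n$ as written; this does not affect the $O(n^2\log L)$ conclusion.
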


\begin{proof}
 We provide a proof for completeness.
 We first note that the set $B$ always contains a minimizer of $f$.
 For an iteration of the algorithm,
we say that it is \textit{of type $i$} if the hyperplane found in
Step 3 is of the form $y(i)=\alpha$.
 In an iteration of type $i$,
the value $u(B;i)- \ell(B;i)$ decreases by 
$(1/n)(u(B;i)- \ell(B;i))$
by the choices of the vector $x$ in Step 1 and the hyperplane in Step 3.
 This implies that after $O(n \log L)$ iterations
of type $i$, we have
$u(B;i)- \ell(B;i) < 1$, i.e., $u(B;i)= \ell(B;i)$,
and therefore an iteration of type $i$ never occurs afterwards.
 Hence, after $O(n^2 \log L)$ iterations we have
$u(B;i)= \ell(B;i)$ for all $i \in N$,
i.e., $B$ consists of a single vector, which must be a minimizer of $f$.
\end{proof}

We now assume that
 $f$ is an s.s.\;quasi \Mnat-convex function (i.e., satisfies {\SSQMn})
such that the effective domain of $f$ is a bounded \Mnat-convex set,
and show that the conditions (DR1) and (DR2)
are satisfied in each iteration.
 The minimizer cut property
(Theorem \ref{thm:min-cut-qMnat})
guarantees the condition (DR2) for
an s.s.\;quasi \Mnat-convex function.
An axis-orthogonal hyperplane satisfying the condition in Step 3
can be found by evaluating function values $O(n^2)$ times.

The condition (DR1), i.e.,
the nonemptiness of the peeled set $\widehat{B}$, can be shown as follows.
 We say that a set $S \subseteq \Z^n$ is an \Mnat-convex set
(see, e.g., \cite[Section 4.7]{Mdcasiam})
if it satisfies the following exchange axiom:
\begin{quote}
 $\forall x, y \in S$, $\forall i \in \suppp (x - y)$,
$\exists j \in \suppm(x - y)\cup\{0\}$ such that
\begin{equation}
\label{eqn:def-Mnat-conv-set}
 x - \chi_{i} + \chi_{j} \in S, \qquad
y + \chi_{i} - \chi_{j} \in S.
\end{equation}
\end{quote}
 It is known that for an \Mnat-convex set $S \subseteq \Z^n$
and 
an integer interval $[a, b]\ (\subseteq \Z^n)$,
their intersection $S \cap [a,b]$
is again an \Mnat-convex set if it is nonempty.
 Hence, the set $B$ in each iteration of the algorithm
is always an \Mnat-convex set as far as it is nonempty.
 The following lemma shows that the set $B$ is always nonempty.

\begin{lemma}
 For a bounded \Mnat-convex set $B \subseteq \Z^n$,
the peeled set $\widehat{B}$ is nonempty.
\end{lemma}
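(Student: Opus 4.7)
My plan is to pass to the convex hull $\overline{B}$ of $B$. Since $B$ is a bounded \Mnat-convex set, $\overline{B}$ is a bounded integral generalized polymatroid (g-polymatroid), and $\overline{B}$ intersected with any integer box is again an integral polytope. Hence it is enough to exhibit one point of $\overline{B}$ lying in the real peeled box $P = \prod_{j \in N}[\ell'(j), u'(j)]$, since the (nonempty) integer intersection will then contain a lattice point, which lies in $B \cap \widehat{B}$ as required.

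To produce such a point I would construct two averaged witnesses. For each $i \in N$, pick $x^{(i)} \in B$ with $x^{(i)}(i) = \ell(B;i)$ and $y^{(i)} \in B$ with $y^{(i)}(i) = u(B;i)$, which exist by the very definitions of $\ell(B;i)$ and $u(B;i)$. Let $\bar{x} = \frac{1}{n}\sum_{i \in N} x^{(i)}$ and $\bar{y} = \frac{1}{n}\sum_{i \in N} y^{(i)}$, both lying in $\overline{B}$. In the expression $\bar{x}(j) = \frac{1}{n}\sum_{i} x^{(i)}(j)$ the term with $i = j$ is exactly $\ell(B;j)$, while each of the remaining $n-1$ terms is at most $u(B;j)$; a symmetric estimate holds for $\bar{y}$. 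Hence for every $j \in N$,
\begin{align*}
\bar{x}(j) & \le \tfrac{1}{n}\ell(B;j) + (1-\tfrac{1}{n})u(B;j) = u'(j), \\
\bar{y}(j) & \ge (1-\tfrac{1}{n})\ell(B;j) + \tfrac{1}{n}u(B;j) = \ell'(j).
\end{align*}

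The main obstacle is that $\bar{x}$ only satisfies the upper box bounds while $\bar{y}$ only the lower ones, and a naive convex combination of the two need not lie in $P$. I would bypass this by invoking the standard feasibility theorem for g-polymatroids intersected with boxes, which characterizes $\overline{B} \cap P \ne \emptyset$ by a system of inequalities relating the submodular/supermodular rank functions defining $\overline{B}$ with the partial sums $\sum_{j \in X} \ell'(j)$ and $\sum_{j \in X} u'(j)$ for $X \subseteq N$. Summing the coordinate-wise bounds above over any $X$ gives $\bar{x}(X) \le \sum_{j \in X} u'(j)$ and $\bar{y}(X) \ge \sum_{j \in X} \ell'(j)$, and combining these with the polymatroid inequalities that $\bar{x}$ and $\bar{y}$ already satisfy in $\overline{B}$ supplies every required feasibility inequality. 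Integrality of $\overline{B} \cap P$ then delivers the desired lattice point of $B \cap P \subseteq \widehat{B}$.
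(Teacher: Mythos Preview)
Your approach differs from the paper's (which lifts $B$ to the M-convex set $S=\{(y,-y(N)):y\in B\}\subseteq\Z^{n}\times\Z$, quotes \cite{Shi98min} for $\widehat S\neq\emptyset$, and projects back), and the part establishing $\overline B\cap P\neq\emptyset$ over the reals is sound: with paramodular pair $(p,b)$ for $\overline B$, your averaged witnesses give $p(X)\le\bar x(X)\le\sum_{j\in X}u'(j)$ and $\sum_{j\in X}\ell'(j)\le\bar y(X)\le b(X)$ for every $X\subseteq N$, which are exactly the Frank-type feasibility conditions for a g-polymatroid to meet a box.

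The gap is the integrality reduction in your first paragraph. The peeled box $P=\prod_j[\ell'(j),u'(j)]$ is \emph{not} an integer box: $\ell'(j)=\ell(B;j)+\tfrac{1}{n}(u(B;j)-\ell(B;j))$ and $u'(j)$ are typically non-integral. You correctly note that an integral g-polymatroid intersected with an \emph{integer} box is integral, but then apply this to $P$; that inference is invalid. Concretely, $\overline B\cap P$ can be a nonempty polytope with no lattice points---indeed the single interval $[\ell'(j),u'(j)]$ may already miss $\Z$ when $u(B;j)-\ell(B;j)$ is small relative to $n$---so exhibiting a real point of $\overline B$ in $P$ does not deliver an element of $\widehat B=B\cap P$. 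To rescue the argument along your lines you would need the rounded inequalities $p(X)\le\sum_{j\in X}\lfloor u'(j)\rfloor$ and $\sum_{j\in X}\lceil\ell'(j)\rceil\le b(X)$, and these do \emph{not} follow from the bounds on $\bar x,\bar y$, because the floor of a sum can strictly exceed the sum of the floors. The paper sidesteps this rounding issue entirely by staying on the lattice and invoking the M-convex result.
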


\begin{proof}
The proof can be reduced to a similar statement 
known for an M-convex set
(see, e.g., \cite{Mdcasiam} for the definition of M-convex set).
First note that a set $B \subseteq \Z^n$ is an \Mnat-convex set
if and only if the set 
$S = \{(y, -y(N)) \mid y \in B\}$
$(\subseteq \Z^n \times \Z)$ 
is an M-convex set.
 For an \Mnat-convex set $B \subseteq \Z^n$,
the peeled set $\widehat{S}$  of the associated M-convex set
$S$ is nonempty by \cite[Theorem 2.4]{Shi98min},
whereas
$\{y \mid (y, -y(N)) \in \widehat{S}\}  \subseteq \widehat{B}$.
Therefore, we have $\widehat{B} \ne \emptyset$.
\end{proof}
\noindent
 We note that for a bounded \Mnat-convex set $B \subseteq \Z^n$,
a vector in the peeled set $\widehat{B}$ can be found
in $O(n^2 \log L)$ time with $L = L_\infty(\dom f)$
as in \cite{Shi98min}.
 Hence,  the condition (DR1) is also satisfied for
s.s.\;quasi \Mnat-convex functions.
 By Theorem \ref{thm:Mminimizer-SSQMn}, Step 2 can be done in
$O(n^2)$ time.
 This concludes the proof of
Corollary \ref{cor:domain-reduction-quasiMnat}.


\section{Concluding Remarks}

\subsection{Remarks on Minimizer Cut Property}
\label{sec:min-cut-remark}

 We note that, 
in addition to Theorem \ref{thm:min-cut-Mnat}, 
\Mnat-convex functions enjoy
the following variants of the minimizer cut property,
which are similar to, but 
stronger than, the statements of Theorem \ref{thm:min-cut-qMnat-2}
for s.s.\;quasi \Mnat-convex functions.
 The statements in the theorem below can be obtained from
the corresponding statements
for M-convex functions \cite[Theorem 2.2]{Shi98min}
(see Theorem \ref{thm:min-cut-M} (i), (ii) in Appendix).

\begin{theorem}[{cf.~{\cite[Theorem 2.2]{Shi98min}}}]
\label{thm:min-cut-Mnat-2}
 Let $f: \Z^n \to \Rinf$ be an \Mnat-convex function with $\arg\min f \ne \emptyset$,
and $x \in \dom f$.
\\
{\rm (i)}
 Let $i \in N$ and let $j$ be an element in $\Nzero$
minimizing the value $f(x - \chi_i + \chi_j)$.
 Then, there exists some minimizer $x^*$ of $f$ satisfying
\begin{equation*}
\begin{cases}
x^*(j) \ge x(j)+1 
& (\mbox{if }j \in N \setminus \{i\}),\\
x^*(i) \ge x(i) 
& (\mbox{if }j =i),\\
x^*(N) \le x(N) -1
& (\mbox{if }j =0).
 \end{cases}
\end{equation*}
{\rm (ii)}
Symmetrically to {\rm (i)},
 let $j \in N$ and let $i$ be an element in $\Nzero$
minimizing the value $f(x - \chi_i + \chi_j)$.
 Then, there exists some minimizer $x^*$ of $f$ satisfying
\begin{equation*}
\begin{cases}
x^*(i) \le x(i)-1 
& (\mbox{if }i \in N \setminus \{j\}),\\
x^*(j) \le x(j) 
& (\mbox{if }i =j),\\
x^*(N) \ge x(N) +1
& (\mbox{if }i =0).
 \end{cases}
\end{equation*}
{\rm (iii)}
Let $j$ be an element in $\Nzero$
minimizing the value $f(x  + \chi_j)$.
 Then, there exists some minimizer $x^*$ of $f$ satisfying
\begin{equation*}
\begin{cases}
x^*(j) \ge x(j)+1 
& (\mbox{if }j \in N),\\
x^*(N) \le x(N)
& (\mbox{if }j =0).
 \end{cases}
\end{equation*}
{\rm (iv)}
Symmetrically to {\rm (iii)},
let $i$ be an element in $\Nzero$
minimizing the value $f(x - \chi_i)$.
 Then, there exists some minimizer $x^*$ of $f$ satisfying
\begin{equation*}
\begin{cases}
x^*(i) \le x(i)-1 
& (\mbox{if }i \in N),\\
x^*(N) \ge x(N)
& (\mbox{if }i =0).
 \end{cases}
\end{equation*}
\end{theorem}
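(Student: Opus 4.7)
The plan is to derive all four parts from the minimizer cut property for M-convex functions (Theorem~\ref{thm:min-cut-M} in the Appendix) via the standard lift that realizes an \Mnat-convex function $f : \Z^n \to \Rinf$ as an M-convex function $\tilde f$ on $\Z^{n+1}$. Concretely, I introduce an auxiliary coordinate indexed by a new element $\tilde 0$ and set $\tilde f(\tilde x) = f(x)$ whenever $\tilde x = (\tilde x(\tilde 0), x)$ satisfies $\tilde x(\tilde 0) + x(N) = c$ for a fixed constant $c$, and $\tilde f(\tilde x) = +\infty$ otherwise. Under this lift, an \Mnat{} move $x \mapsto x - \chi_i + \chi_j$ with $i, j \in N$ lifts to the M move $\tilde x \mapsto \tilde x - \chi_i + \chi_j$, while the unlifted index $j = 0$ corresponds to the auxiliary lifted index $\tilde 0$: the decrement $x(N) \to x(N) - 1$ caused by $x \mapsto x - \chi_i$ is absorbed by the increment $\tilde x(\tilde 0) \to \tilde x(\tilde 0) + 1$.

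For part (i), I fix $i \in N$ and let $j \in \Nzero$ minimize $f(x - \chi_i + \chi_j)$. Lifting the moves to $\Z^{n+1}$, the same index $j$ (identified with $\tilde 0$ when $j = 0$) minimizes $\tilde f(\tilde x - \chi_i + \chi_{j})$ over the full augmented ground set. Applying the M-convex minimizer cut property of Theorem~\ref{thm:min-cut-M}(i) to $\tilde f$, $\tilde x$, $i$, and the lifted $j$ yields a minimizer $\tilde x^* = (\tilde x^*(\tilde 0), x^*)$ of $\tilde f$, and dropping the auxiliary coordinate gives $x^* \in \arg\min f$. The three cases of the conclusion translate as follows: for $j \in N \setminus \{i\}$, the M-convex bound $\tilde x^*(j) \ge \tilde x(j) + 1$ is literally $x^*(j) \ge x(j) + 1$; for $j = i$, the degenerate case yields $\tilde x^*(i) \ge \tilde x(i)$, i.e., $x^*(i) \ge x(i)$; for $j = 0$, the inequality $\tilde x^*(\tilde 0) \ge \tilde x(\tilde 0) + 1$, combined with $\tilde x^*(\tilde 0) + x^*(N) = c = \tilde x(\tilde 0) + x(N)$, becomes $x^*(N) \le x(N) - 1$.

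Parts (ii)--(iv) follow from the same lift. Part (ii) is obtained either directly from the symmetric M-convex statement (Theorem~\ref{thm:min-cut-M}(ii)) or by applying (i) to $g(x) = f(-x)$, which remains \Mnat-convex. For parts (iii) and (iv), the unlifted moves $x \mapsto x + \chi_j$ and $x \mapsto x - \chi_i$ change $x(N)$ by $\pm 1$, so they lift to the M moves $\tilde x \mapsto \tilde x - \chi_{\tilde 0} + \chi_j$ and $\tilde x \mapsto \tilde x - \chi_i + \chi_{\tilde 0}$, respectively; applying Theorem~\ref{thm:min-cut-M} with the auxiliary index $\tilde 0$ pinned on the appropriate side then delivers the stated conditions, the $x^*(N) \le x(N)$ and $x^*(N) \ge x(N)$ clauses arising from translating inequalities on $\tilde x^*(\tilde 0)$ back through the constraint $\tilde x(\tilde 0) + x(N) = c$. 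The only delicate point is the bookkeeping between the auxiliary coordinate and the sum $x(N)$, together with the handling of the degenerate $j = i$ (resp.\ $i = j$) cases; once this dictionary is fixed, every case reduces to a direct reading of the M-convex result.
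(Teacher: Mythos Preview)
Your proposal is correct and takes essentially the same approach as the paper, which simply states that the theorem ``can be obtained from the corresponding statements for M-convex functions \cite[Theorem~2.2]{Shi98min} (see Theorem~\ref{thm:min-cut-M} (i), (ii) in Appendix)'' without spelling out details. Your lift to $\tilde f$ on $\Z^{n+1}$ and the dictionary between the auxiliary coordinate $\tilde 0$ and the sum $x(N)$ is exactly the intended translation; in particular, parts (iii) and (iv) arise from Theorem~\ref{thm:min-cut-M} by taking $i=\tilde 0$ (resp.\ $j=\tilde 0$), with the $j=0$ subcase of (iii) matching the degenerate $j=i$ clause.
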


It is in order here to dwell on the difference between
Theorem \ref{thm:min-cut-Mnat-2} 
and Theorem~\ref{thm:min-cut-qMnat-2}
by focusing on the statement (i).
The statement (i) of
Theorem \ref{thm:min-cut-Mnat-2} 
for \Mnat-convex functions
covers all possible cases of $j \in \Nzero$.
In contrast,
the statement (i) of
Theorem~\ref{thm:min-cut-qMnat-2} 
for s.s.~quasi \Mnat-convex functions
puts an assumption that $j \ne 0$ attains the minimum,
which means that we can obtain no conclusion 
if the minimum of
$f(x - \chi_{i} + \chi_{j})$ over $j \in \Nzero$
is attained uniquely by $j = 0$.
Thus, the statement (i) of
Theorem~\ref{thm:min-cut-qMnat-2} 
is strictly weaker than
the statement (i) of
Theorem \ref{thm:min-cut-Mnat-2}.

 We present an example
 to show that  the statements (i) and  (iii) of
Theorem \ref{thm:min-cut-Mnat-2} in the case of $j=0$
do not hold for s.s.\;quasi \Mnat-convex functions.

 \begin{figure}[t]
 \begin{center}
 \includegraphics[width=0.4\textwidth]{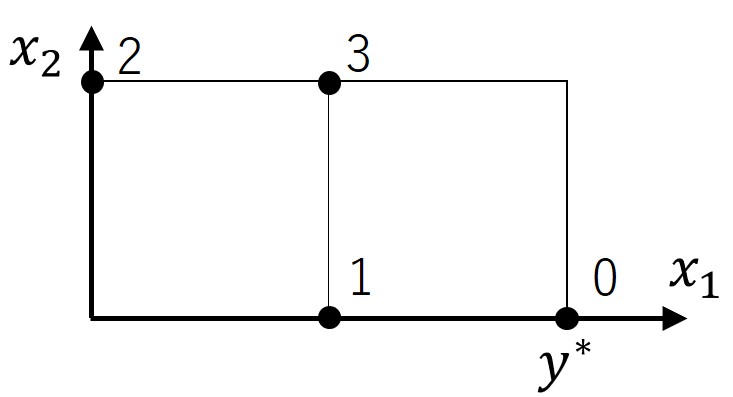}
 \caption{Values of function $f$
 in Example \ref{ex:mincut-counter-ex}.} 
 \label{fig:mincut-example}
 \end{center}
 \end{figure}

\begin{example}\rm
\label{ex:mincut-counter-ex}
 Here is an example to show that 
the statements (i) and  (iii) of
Theorem \ref{thm:min-cut-Mnat-2} in the case of $j=0$
are not true for s.s.\;quasi \Mnat-convex functions.
 Consider the function $f: \Z^2 \to \Rinf$ given by%
\footnote{
 This function $f$ is an adaptation of the one in \cite[Example 5.1]{MY15mor}
to a discrete quasi convex function.
 The function in \cite[Example 5.1]{MY15mor}
is used in \cite{MY15mor}
to point out  the difference between the weaker versions of 
{\SSQMn} and (SSQ\Mnat-EXC-PRJ);
see also Section \ref{sec:quasiM}.
}
\begin{align*}
&  \dom f= \{(1,0), (2,0), (0,1), (1,1)\},\\
& f(1,0) = 1,\ f(2,0)=0,\ f(0,1)=2,\ f(1,1)=3
\end{align*}
(see Figure \ref{fig:mincut-example}).
 Function $f$ satisfies the condition {\SSQMn}.

 For $x =(1,1)$ and $i = 1$, 
$j = 0$ minimizes the value $f(x - \chi_1 + \chi_j)$
among all $j \in \Nzero$.
 However, the unique minimizer $y^* = (2,0)$ 
does not satisfy $y^*(N) \le x(N)-1$,
i.e.,
the statement (i) of
Theorem \ref{thm:min-cut-Mnat-2} does not hold
in the case of $j=0$.

 For $y = (0,1)$,
$j=0$ minimizes  the value $f(y + \chi_j)$
among all $j \in \Nzero$.
 However, the unique minimizer $y^* = (2,0)$ 
does not satisfy $y^*(N) \le y(N)$, i.e.,
the statement (iii) of
Theorem \ref{thm:min-cut-Mnat-2} does not hold
in the case of $j=0$.
 \qed
\end{example}

 We can also show that the statements (ii) and (iv) of 
Theorem~\ref{thm:min-cut-qMnat-2} in the case of $i=0$ 
do not hold for s.s.\;quasi \Mnat-convex functions;
a counterexample to the statements is given by
the function $g(x) = f(-x)$ with
the function $f$ in Example \ref{ex:mincut-counter-ex}.


\subsection{Connection with Quasi M-convex Functions}
\label{sec:quasiM}

As mentioned in Introduction, the concept of
s.s.\;quasi M-convex function is proposed
in \cite{Mdcasiam,MS03quasi} as a quasi-convex version of 
M-convex function.
 We explain the subtle difference between
s.s.\;quasi M-convexity and s.s.\;quasi \Mnat-convexity in this section%
\footnote{
The difference between the weaker versions of 
{\SSQMn} and (SSQ\Mnat-EXC-PRJ) is already pointed out by
Murota and Yokoi \cite{MY15mor}.
}.

 Recall that the condition {\SSQMn}
defining s.s.\;quasi \Mnat-convexity is obtained 
by relaxing the condition (\Mnat-EXC) for \Mnat-convex functions.
 Similarly, the concept of s.s.\;quasi M-convex function
is defined by using the relaxed version of the exchange axiom
for M-convex functions as follows.

A function $f: \Z^n \to \Rinf$ is said to be M-convex
if it satisfies the following exchange axiom:
\begin{quote}
{\bf (M-EXC)}
 $\forall x, y \in \dom f$, $\forall i \in \suppp (x - y)$,
$\exists j \in \suppm(x - y)$ such that
\begin{equation}
\label{eqn:def-Mnat-ineq-2}
  f(x) + f(y) \geq f(x - \chi_{i} + \chi_{j}) 
 + f(y + \chi_{i} - \chi_{j}).
\end{equation}
\end{quote}
 A semi-strictly quasi M-convex function 
is defined as a function satisfying 
the following relaxed version of (M-EXC):
\begin{quote}
{\SSQMb}
 $\forall x, y \in \dom f$, $\forall i \in \suppp (x - y)$,
$\exists j \in \suppm(x - y)$ satisfying
at least one of the three conditions:
\begin{align}
& f(x - \chi_i + \chi_j)< f(x),
\label{eqn:SSQM-1}\\
& f(y + \chi_i - \chi_j) < f(y),
\label{eqn:SSQM-2}\\
& f(x - \chi_i + \chi_j) = f(x) \mbox{ and } f(y + \chi_i - \chi_j) = f(y).
\label{eqn:SSQM-3}
\end{align}
\end{quote}

 Recall also that an \Mnat-convex function is originally
defined as the projection of an M-convex function:
an \Mnat-convex function is defined as a function
$f: \Z^n \to \Rinf$ 
such that the function $\tilde{f}: \Z^n \times \Z \to \Rinf$ given by
\begin{equation}
\label{eqn:def-tildef}
  \tilde{f}(x, x_0) =
\begin{cases}
f(x) & (\mbox{if }x_0= - x(N)),\\
+ \infty & (\mbox{if }x_0 \ne - x(N))
\end{cases}
\end{equation}
is M-convex (i.e., satisfies (M-EXC)).
 Hence, a function $f$ is \Mnat-convex if and only if
it satisfies the following exchange axiom obtained by the projection of (M-EXC):
\begin{quote}
{\bf (\Mnat-EXC-PRJ)}
 $\forall x, y \in \dom f$, \\
(i) if $x(N) > y(N)$, then 
$\forall i \in \suppp (x - y)$,
$\exists j \in \suppm(x - y)\cup\{0\}$ satisfying
\eqref{eqn:def-Mnat-ineq-2},\\
(ii) if $x(N) \le y(N)$, then 
$\forall i \in \suppp (x - y)$,
$\exists j \in \suppm(x - y)$ satisfying
\eqref{eqn:def-Mnat-ineq-2},\\
(iii) if $x(N) < y(N)$, then 
$\exists j \in \suppm(x - y)$ satisfying
\eqref{eqn:def-Mnat-ineq-2} with $i=0$.
\end{quote}
 That is, the following equivalence holds for \Mnat-convex functions.
\begin{theorem}[{\cite[Theorem 4.2]{MS99gp}}]
\label{thm:equiv-Mnat}
 For $f: \Z^n \to \Rinf$, 
\begin{equation}
\label{eqn:Mnat-thm-equiv}
\tilde{f} \mbox{ in \eqref{eqn:def-tildef} satisfies {\rm (M-EXC)}}
\iff
f \mbox{ satisfies {\rm (\Mnat-EXC-PRJ)}}
\iff
f \mbox{ satisfies {\rm (\Mnat-EXC)}.}
\end{equation}
\end{theorem}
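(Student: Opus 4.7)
The plan is to establish the three-way equivalence by running the cycle of implications: (M-EXC) on $\tilde f$ implies (\Mnat-EXC-PRJ), which implies (\Mnat-EXC), which in turn implies (M-EXC) on $\tilde f$. The first two implications amount to bookkeeping, while the last is where the real content lies.

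For the first step I would lift $x, y \in \dom f$ to $\tilde x = (x, -x(N))$ and $\tilde y = (y, -y(N))$ in $\dom \tilde f \subseteq \Z^n \times \Z$, using $0$ as the index of the extra coordinate. A direct calculation gives
$\suppp(\tilde x - \tilde y) = \suppp(x-y) \cup \{0 \mid x(N) < y(N)\}$ and
$\suppm(\tilde x - \tilde y) = \suppm(x-y) \cup \{0 \mid x(N) > y(N)\}$,
together with $\tilde f(\tilde x - \chi_i + \chi_j) = f(x - \chi_i + \chi_j)$ for admissible $i, j \in N \cup \{0\}$ (interpreting $\chi_0 = 0$ on the $f$ side and as the unit vector in the extra coordinate on the $\tilde f$ side). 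Spelling out (M-EXC) for $\tilde f$ and splitting on the three mutually exclusive cases $x(N) > y(N)$, $x(N) = y(N)$, and $x(N) < y(N)$ delivers clauses (i), (ii), and (iii) of (\Mnat-EXC-PRJ) one by one; running the same bookkeeping in reverse gives the converse. The implication (\Mnat-EXC-PRJ) $\Rightarrow$ (\Mnat-EXC) is then immediate, since in each of the three clauses the index $j$ delivered lies in $\suppm(x-y) \cup \{0\}$, which is exactly what (\Mnat-EXC) demands.

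The substantive step is (\Mnat-EXC) $\Rightarrow$ (M-EXC) on $\tilde f$, which must close two gaps invisible to (\Mnat-EXC): (a) when $x(N) \le y(N)$ and (\Mnat-EXC) forces $j = 0$ for a given $i \in \suppp(x-y)$, one still needs some $j' \in \suppm(x-y)$ realizing the exchange inequality, because $0 \notin \suppm(\tilde x - \tilde y)$; and (b) when $x(N) < y(N)$ one needs an exchange statement with $\tilde i = 0$, which corresponds to an inequality of the form $f(x) + f(y) \ge f(x + \chi_j) + f(y - \chi_j)$ that (\Mnat-EXC) does not directly address. My strategy for both gaps is induction on $\|x - y\|_1$, the base case $\|x - y\|_1 \le 2$ being handled by direct inspection. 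For gap (a), if (\Mnat-EXC) applied to $(x, y, i)$ offers only $j = 0$, I would reapply (\Mnat-EXC) to the auxiliary pair $(x - \chi_i, y)$ or $(x, y + \chi_i)$, extract a genuine $j' \in \suppm(x-y)$, and chain the two exchange inequalities to cancel the $\chi_i$ intermediate. For gap (b), I would apply (\Mnat-EXC) to the swapped pair $(y, x, i')$ with $i' \in \suppp(y-x)$ (nonempty because $x(N) < y(N)$), and argue by a descent and counting argument on the imbalance $y(N) - x(N)$ that at least one choice of $i'$ must yield $j' = 0$, which is precisely the $\tilde i = 0$ exchange with $j = i'$. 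The main obstacle is keeping the auxiliary pairs inside $\dom f$ and ensuring that the chained inequalities compose cleanly; this combinatorial chaining is the delicate core that underlies \cite[Theorem 4.2]{MS99gp}.
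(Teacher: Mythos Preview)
The paper does not prove this theorem; it is stated with a citation to \cite[Theorem~4.2]{MS99gp} and used only as background to contrast the clean equivalence in the \Mnat-convex case with the strict one-way implication \eqref{eqn:SSQMn-equiv} in the quasi case. So there is no ``paper's own proof'' to compare against beyond the pointer to the original source.

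Your proposal is structurally sound: the first equivalence really is bookkeeping under the lift $x \mapsto (x,-x(N))$, the implication {\rm (\Mnat-EXC-PRJ)} $\Rightarrow$ {\rm (\Mnat-EXC)} is trivial, and you have correctly isolated the only two nontrivial subclaims (your gaps (a) and (b)) needed for the reverse direction. Where the proposal falls short is exactly where you flag it as ``delicate'': the chaining arguments you outline are not yet proofs. For gap (a), applying {\rm (\Mnat-EXC)} to an auxiliary pair such as $(x-\chi_i,\,y)$ produces an inequality of the form $f(x-\chi_i)+f(y)\ge f(x-\chi_i-\chi_{i'}+\chi_{j'})+f(y+\chi_{i'}-\chi_{j'})$, and it is not clear how to combine this with the $j=0$ inequality $f(x)+f(y)\ge f(x-\chi_i)+f(y+\chi_i)$ so that the intermediate terms cancel and a single $j'\in\suppm(x-y)$ emerges for the \emph{original} pair $(x,y)$; the two-sided nature of the exchange inequality blocks naive telescoping. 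For gap (b), the phrase ``descent and counting argument on the imbalance $y(N)-x(N)$'' does not yet pin down an invariant that forces some $i'\in\suppp(y-x)$ to admit $j'=0$: if every such $i'$ is matched to some $j'\in\suppp(x-y)$, the exchange preserves both component sums, so the imbalance never moves and no descent occurs. These are precisely the points where the original proof in \cite{MS99gp} does real work, and your sketch, as written, defers to that source rather than supplying an independent argument.
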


For s.s.\;quasi \Mnat-convex functions,
we may similarly consider the function $\tilde{f}$ in \eqref{eqn:def-tildef}  
and the projected version of {\SSQM}:
\begin{quote}
{\bf (SSQ\Mnat-EXC-PRJ)}
 $\forall x, y \in \dom f$, \\
(i) if $x(N) > y(N)$, then 
$\forall i \in \suppp (x - y)$,
$\exists j \in \suppm(x - y)\cup\{0\}$ satisfying
at least one of \eqref{eqn:SSQM-1}, \eqref{eqn:SSQM-2}, and \eqref{eqn:SSQM-3},
\\
(ii) if $x(N) \le y(N)$, then 
$\forall i \in \suppp (x - y)$,
$\exists j \in \suppm(x - y)$ satisfying
at least one of \eqref{eqn:SSQM-1}, \eqref{eqn:SSQM-2}, and \eqref{eqn:SSQM-3},
\\
(iii) if $x(N) < y(N)$, then 
$\exists j \in \suppm(x - y)$ satisfying
at least one of \eqref{eqn:SSQM-1}, \eqref{eqn:SSQM-2}, and \eqref{eqn:SSQM-3}
 with $i=0$.
\end{quote}

\noindent
 While the conditions (\Mnat-EXC-PRJ) and (\Mnat-EXC)
are equivalent to each other, 
as mentioned in Theorem \ref{thm:equiv-Mnat},
the condition (SSQ\Mnat-EXC-PRJ) is 
not equivalent to, but strictly stronger than, {\SSQMn}
for s.s.\;quasi \Mnat-convex functions.
That is, we have
\begin{equation}
\label{eqn:SSQMn-equiv}
\mbox{$\tilde{f}$ in \eqref{eqn:def-tildef} satisfies {\SSQM}
$\iff$
$f$ satisfies {\rm (SSQ\Mnat-EXC-PRJ)}
\renewcommand{\arraystretch}{.4}
$\begin{array}{l}
\Longrightarrow \\
\Longleftarrow\!\!\!\!\!\!\not
\end{array}$
\renewcommand{\arraystretch}{1}
$f$ satisfies {\rm {\SSQMn}}
} 
\end{equation}
in contrast to \eqref{eqn:Mnat-thm-equiv}.
 To be more specific, it is easy to see that
\begin{quote}
 $\bullet$ (SSQ\Mnat-EXC-PRJ) (i) and (ii) together imply {\SSQMn},\\
 $\bullet$ {\SSQMn} implies (SSQ\Mnat-EXC-PRJ) (i),
\end{quote}
but {\SSQMn} does not imply (ii) and (iii) of (SSQ\Mnat-EXC-PRJ),
as shown in the examples below.

\begin{example}\rm
\label{ex:mnat-prj-ii-cex}
 Consider the function $f: \Z^2 \to \Rinf$ given by 
\begin{align*}
&  \dom f= \{x \in \Z^2 \mid x(1)\ge 0, \ x(2) \ge 0,\ x(1)+x(2) \le 2\},\\
& f(0,0)=0,\ f(1,0)=f(0,1)=1,\ f(0,2)=2,\ f(2,0)=f(1,1)=3
\end{align*}
(see Figure \ref{fig:mnat-prj-ii-cex}).
 Function $f$ satisfies the condition {\SSQMn}.
 The condition 
(SSQ\Mnat-EXC-PRJ) (ii) fails for this function. 
 \begin{figure}[t]
 \begin{center}
 \includegraphics[width=0.35\textwidth]{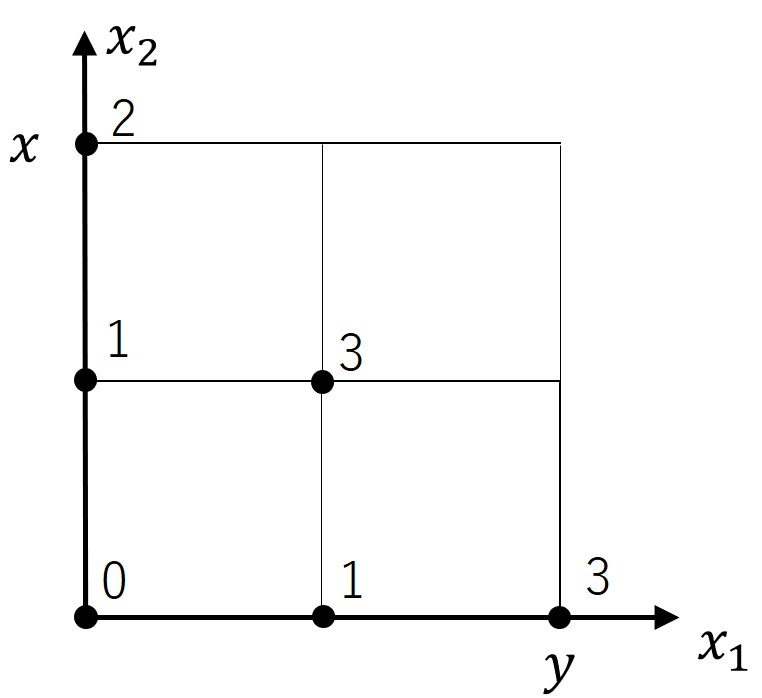}
 \caption{Values of function $f$
 in Example \ref{ex:mnat-prj-ii-cex}.} 
 \label{fig:mnat-prj-ii-cex}
 \end{center}
 \end{figure}
 Indeed, for $x=(0,2)$, $y=(2,0)$, and $i=2 \in \suppp(x-y)$,
we have a unique element $j=1 \in \suppm(x-y)$, for which
\begin{align*}
&  x - \chi_i + \chi_j =y + \chi_i - \chi_j = (1,1),\\
&  f(x) = 2 < 3 = f(x - \chi_i + \chi_j),\qquad
 f(y) =  3 = f(y + \chi_i - \chi_j).
\end{align*}
\qed
\end{example}

\begin{example}\rm
 Consider the function $f: \Z^2 \to \Rinf$ in
Example \ref{ex:mincut-counter-ex}, which satisfies {\SSQMn}.
 The condition (iii) of (SSQ\Mnat-EXC-PRJ)
fails for this function.
 Indeed, for $x= (0,1)$ and $y = (2,0)$,
we have $x(N)=1 < 2 = y(N)$ and $\suppm(x-y)=\{1\}$,
but 
\begin{align*}
 & x + \chi_1 = (1,1), \quad y- \chi_1 = (1,0), \\
 & f(x + \chi_1) = 3 > 2 = f(x), \quad
  f(y - \chi_1) = 1 > 0 = f(y).
\end{align*}
 We can also verify that
$\tilde{f}$ in  \eqref{eqn:def-tildef}
does not satisfy (SSQM), which is consistent with
\eqref{eqn:SSQMn-equiv}.
\qed 
\end{example} 

It is known that an s.s.\;quasi M-convex function
(i.e., function $f$ satisfying (SSQM))
satisfies the minimizer cut property \cite[Theorem 4.3]{MS03quasi}
and the proximity property \cite[Theorem 4.4]{MS03quasi};
it can be shown that an s.s.\;quasi M-convex function
also enjoys the geodesic property (see Section \ref{sec:geodesic-quasiM-proof} 
in Appendix).
 It follows from this fact that
if a function $f: \Z^n \to \Rinf$ satisfies
the (stronger) condition (SSQ\Mnat-EXC-PRJ), then
it satisfies the same statements
as in Theorem \ref{thm:min-cut-Mnat} (minimizer cut property),
Theorem \ref{thm:strong-min-cut-Mnat} (geodesic property),
and Theorem \ref{thm:proxmity-Mnat} (proximity property)
for \Mnat-convex functions.
 In this connection we emphasize that
the s.s.\;quasi \Mnat-convex functions in Examples \ref{ex:min-cut-qMnat-1}
and \ref{ex:proximity} do not satisfy (SSQ\Mnat-EXC-PRJ) (iii).




\appendix


\section{Appendix}

\subsection{Properties on Minimization of M-convex Functions}
\label{sec:prop-M-conv}

For ease of comparison between M- and \Mnat-convexity, we recall 
 four theorems 
for minimization of an M-convex function,
which correspond, respectively, 
 to Theorems \ref{thm:Mminimizer-Mnat},
\ref{thm:min-cut-Mnat}, \ref{thm:strong-min-cut-Mnat}, and \ref{thm:proxmity-Mnat}
for an \Mnat-convex function.

\paragraph{Optimality Condition by Local Optimality}

\begin{theorem}[{{\cite[Theorem 2.4]{Mstein96}, \cite[Theorem 2.2]{Msbmfl99}}}]
\label{thm:Mminimizer-M}
 Let $f: \Z^n \to \Rinf$ be an M-convex function.
 A vector $x^* \in \dom f$ 
is a minimizer of $f$ if and only if
\begin{align*}
f(x^* - \chi_i + \chi_j) \ge f(x^*) \qquad (i, j \in N).
\end{align*}
\end{theorem}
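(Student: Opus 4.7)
The plan is to prove the ``if'' direction by an induction argument that mirrors (but is simpler than) the one used for Theorem~\ref{thm:Mminimizer-SSQMn}, exploiting the stronger exchange axiom (M-EXC). The ``only if'' direction is trivial: a global minimizer is, in particular, a local minimizer in the sense of the stated inequalities.

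For the ``if'' direction, I would first recall the standard fact that the effective domain of any M-convex function lies in a hyperplane $\{z \in \Z^n \mid z(N) = r\}$; in particular, $x^*(N) = y(N)$ for every $y \in \dom f$, so that $\suppp(x^* - y)$ and $\suppm(x^* - y)$ are simultaneously empty or simultaneously nonempty. I would then fix an arbitrary $y \in \dom f$ and prove $f(y) \ge f(x^*)$ by induction on
\[
 \alpha = \sum_{i \in \suppp(x^* - y)} (x^*(i) - y(i)).
\]
The base case $\alpha = 0$ forces $y = x^*$ by the hyperplane observation, so the conclusion is immediate.

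For the inductive step with $\alpha \ge 1$, I would pick any $i \in \suppp(x^* - y)$ and apply (M-EXC) to $x^*, y, i$ to obtain some $j \in \suppm(x^* - y)$ satisfying
\[
 f(x^*) + f(y) \ge f(x^* - \chi_i + \chi_j) + f(y + \chi_i - \chi_j).
\]
The local-optimality hypothesis applied to the pair $(i,j) \in N \times N$ gives $f(x^* - \chi_i + \chi_j) \ge f(x^*)$, hence $f(y) \ge f(y + \chi_i - \chi_j)$. Setting $y' = y + \chi_i - \chi_j$, the value of $\alpha$ strictly decreases when $y$ is replaced by $y'$, so the induction hypothesis yields $f(y') \ge f(x^*)$, and therefore $f(y) \ge f(x^*)$.

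I do not expect a real obstacle here: the stronger axiom (M-EXC), in which $j$ can always be chosen in $\suppm(x^*-y)$ proper (rather than in $\suppm(x^*-y)\cup\{0\}$), obviates the case analysis needed in the s.s.\;quasi setting, and the equality-preserving conditions of type \eqref{eqn:def-SSQMnat-ineq-3} never need to be considered. The only subtle point to flag is the hyperplane structure of $\dom f$, which is what makes $\alpha$ a legitimate induction parameter and what rules out the ``$j=0$'' possibility that complicated the \Mnat-convex and s.s.\;quasi \Mnat-convex cases.
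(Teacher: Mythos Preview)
The paper does not supply its own proof of Theorem~\ref{thm:Mminimizer-M}; it is stated in the Appendix as a known result imported from the cited references. Your argument is correct and is precisely the expected one: it is the natural simplification of the paper's proof of the analogous Theorem~\ref{thm:Mminimizer-SSQMn} (via its accompanying lemma), where the stronger axiom (M-EXC) eliminates the $j=0$ possibility and collapses the double induction on $(\alpha,\beta)$ to a single induction on $\alpha$.
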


\noindent
The same statement holds also for s.s.\;quasi M-convex functions
\cite[Theorem 4.2]{MS03quasi}.

\paragraph{Minimizer Cut Property}

\begin{theorem}[{{\cite[Theorem 2.2]{Shi98min}}}]
\label{thm:min-cut-M}
 Let $f: \Z^n \to \Rinf$ be an M-convex function with $\arg\min f \ne \emptyset$,
and $x \in \dom f$ be a vector with $x \not\in \arg\min f$.
\\
{\rm (i)}
 For $i \in N$,
let $j \in N$ be an element 
minimizing the value $f(x - \chi_i + \chi_j)$.
 Then, there exists some minimizer $x^*$ of $f$ satisfying
\begin{equation*}
\begin{cases}
x^*(j) \ge x(j)+1 
& (\mbox{if }j \in N \setminus \{i\}),\\
x^*(i) \ge x(i) 
& (\mbox{if }j =i).
 \end{cases}
\end{equation*}
{\rm (ii)}
 Symmetrically, for $j \in N$, let $i \in N$ be an element 
minimizing the value $f(x - \chi_i + \chi_j)$.
 Then, there exists some minimizer $x^*$ of $f$ satisfying
\begin{equation*}
\begin{cases}
x^*(i) \le x(i)-1 
& (\mbox{if }i \in N \setminus \{j\}),\\
x^*(j) \le x(j) 
& (\mbox{if }i =j).
 \end{cases}
\end{equation*}
{\rm (iii)}
 For a pair $(i,j)$ of distinct elements in $N$
minimizing the value $f(x - \chi_i + \chi_j)$,
there exists some minimizer $x^*$ of~$f$ satisfying
$x^*(i) \le x(i)-1$ and
$x^*(j) \ge x(j)+1$. 
\end{theorem}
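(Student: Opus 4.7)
The plan is to prove the three parts in the natural order using an extremal-minimizer argument driven by the exchange axiom (M-EXC); this is the M-convex counterpart of the proof of Theorem \ref{thm:min-cut-qMnat-2} given in Section \ref{sec:proof-min-cut}, but using the stronger additive inequality in (M-EXC) in place of the trichotomy in {\SSQM}. The key simplification compared with the {\Mnat}-case is that every exchange index now lies in $N$ (no $\{0\}$ augmentation is needed), since $\dom f$ is contained in a hyperplane $\{y \mid y(N) = r\}$.

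For part (i), I would set $x' = x - \chi_i + \chi_j$ and observe that both subcases reduce to exhibiting $x^* \in \arg\min f$ with $x^*(j) \ge x'(j)$. The strategy is to pick $x^* \in \arg\min f$ maximizing $x^*(j)$ and derive a contradiction from the assumption $x^*(j) < x'(j)$. Applying (M-EXC) to $x'$, $x^*$ and $j \in \suppp(x' - x^*)$ yields $r \in \suppm(x' - x^*) \subseteq N$, $r \ne j$, with
\[
f(x') + f(x^*) \ge f(x' - \chi_j + \chi_r) + f(x^* + \chi_j - \chi_r).
\]
Since $x' - \chi_j + \chi_r = x - \chi_i + \chi_r$, the hypothesis that $j$ minimizes $f(x - \chi_i + \chi_{j'})$ over $j' \in N$ gives $f(x' - \chi_j + \chi_r) \ge f(x')$; combined with $f(x^* + \chi_j - \chi_r) \ge f(x^*)$, which holds because $x^* \in \arg\min f$, this forces equality throughout, so $x^* + \chi_j - \chi_r$ would be a minimizer whose $j$-th coordinate is $x^*(j) + 1$, contradicting the maximality of $x^*(j)$. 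Part (ii) then follows from (i) applied to $g(y) = f(-y)$, which is again M-convex with the roles of $i$ and $j$ interchanged.

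For part (iii), I would combine (i) with a second extremal step. By (i), there exists $x^* \in \arg\min f$ with $x^*(j) \ge x(j) + 1$; among such minimizers I would choose one minimizing $x^*(i)$ and assume for contradiction $x^*(i) \ge x(i)$. Then $i \in \suppp(x^* - x')$ with $x' = x - \chi_i + \chi_j$, and (M-EXC) yields $r \in \suppm(x^* - x') \subseteq N$ with
\[
f(x^*) + f(x') \ge f(x^* - \chi_i + \chi_r) + f(x' + \chi_i - \chi_r).
\]
The pair-optimality of $(i, j)$, applied through $x' + \chi_i - \chi_r = x - \chi_r + \chi_j$, gives $f(x' + \chi_i - \chi_r) \ge f(x')$, while $f(x^* - \chi_i + \chi_r) \ge f(x^*)$ is automatic; equality throughout would make $x^* - \chi_i + \chi_r$ a minimizer with $j$-th coordinate unchanged but $i$-th coordinate reduced by one, contradicting the choice of $x^*$. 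The hard part will be the bookkeeping in (iii): one must verify that $r \notin \{i, j\}$ so the contradicting vector still lies in the prescribed subfamily (the case $r = j$ is excluded by $x^*(j) \ge x(j) + 1 > x'(j) - 1$, and $r = i$ is ruled out by $r \in \suppm$), and one must invoke the full pair-optimality of $(i, j)$ rather than the single-index optimality used in part (i).
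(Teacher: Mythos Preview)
The paper does not actually prove Theorem~\ref{thm:min-cut-M}; it is merely recalled in the Appendix with a citation to \cite[Theorem~2.2]{Shi98min}, so there is no in-paper proof to compare against directly. That said, your argument is correct and is precisely the M-convex analogue of the paper's proof of Theorem~\ref{thm:min-cut-qMnat-2}: the extremal-minimizer choice combined with the exchange axiom, with the additive inequality of (M-EXC) replacing the trichotomy of {\SSQM}.

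One minor remark on part~(iii): your bookkeeping is sound, but the exclusion $r\ne j$ is not actually needed for the contradiction---even if $r=j$ were possible, the vector $x^*-\chi_i+\chi_r$ would still satisfy $(x^*-\chi_i+\chi_r)(j)\ge x^*(j)\ge x(j)+1$, since $i\ne j$. Only $r\ne i$ (which follows from $r\in\suppm(x^*-x')$ and $i\in\suppp(x^*-x')$) is essential. Also note that the paper, when deriving Theorem~\ref{thm:min-cut-qMnat} from Theorem~\ref{thm:min-cut-qMnat-2}, handles the two-sided conclusion by restricting $f$ to the half-space $\{y\mid y(j)\ge x(j)+1\}$ and then applying the symmetric statement~(ii) to the restriction; your direct extremal argument (minimize $x^*(i)$ among minimizers with $x^*(j)\ge x(j)+1$) is an equivalent but slightly more self-contained way to reach the same conclusion.
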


\noindent
The same statement holds also for s.s.\;quasi M-convex functions
\cite[Theorem 4.3]{MS03quasi}.

\paragraph{Geodesic Property}

Recall the definitions of $\mu(x)$ and $M(x)$ in
\eqref{eqn:def-mu} and \eqref{eqn:def-Mx}, respectively.

\begin{theorem}[{\cite[Corollary~4.2]{Shi22L1}, \cite[Theorem~2.4]{MS21steepM}}]
\label{thm:geodesic-M}
 Let $f: \Z^n \to \Rinf$ be an M-convex function with $\arg\min f \ne \emptyset$,
and $x \in \dom f$ be a vector that is not a minimizer of $f$.
 Also, let $(i,j)$ be a pair of distinct elements in $N$
 minimizing the value $f(x - \chi_i + \chi_j)$,
 and define 
 \begin{equation*}
 {M}'=
 \{x^* \in M(x)
 \mid x^*(i) \le x(i)-1,\ x^*(j) \ge x(j)+1  \}.
 \end{equation*}
 {\rm (i)}
 There exists some $x^* \in M(x)$ that is contained in ${M}'$;
 we have ${M}' \ne \emptyset$, in particular.
 \\
 {\rm (ii)}
 It holds that
$\mu(x - \chi_i + \chi_j) = 
 \mu(x)-2$
and $M(x - \chi_i + \chi_j) ={M}'$.
\end{theorem}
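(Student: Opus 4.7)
The plan is to derive this geodesic property by combining the minimizer cut property (Theorem \ref{thm:min-cut-M} (iii)) with an exchange argument among minimizers, exploiting the constancy of the coordinate sum on $\dom f$.

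First I record a distance identity. Since $f$ is M-convex, every $x^* \in \arg\min f$ satisfies $x^*(N) = x(N) = x'(N)$ where $x' := x - \chi_i + \chi_j$, and a coordinate-wise computation gives
\[
\|x^* - x'\|_1 - \|x^* - x\|_1 \in \{-2, 0, +2\},
\]
with the value $-2$ attained precisely when $x^*(i) \le x(i) - 1$ and $x^*(j) \ge x(j) + 1$. Combined with the triangle inequality, this yields $\mu(x') \ge \mu(x) - 2$ and characterizes $M'$ as the set of $x^* \in M(x)$ attaining $\|x^* - x'\|_1 = \mu(x) - 2$.

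For part (i), I would invoke Theorem \ref{thm:min-cut-M} (iii) to obtain some $\hat{y} \in \arg\min f$ with $\hat{y}(i) \le x(i) - 1$ and $\hat{y}(j) \ge x(j) + 1$, then pick such $\hat{y}$ minimizing $\|\hat{y} - x\|_1$ and show it lies in $M(x)$. Assuming for contradiction $\|\hat{y} - x\|_1 > \mu(x)$, fix $y^* \in M(x)$. Because $\hat{y}$ and $y^*$ are both minimizers with the same coordinate sum, the axiom (M-EXC) at any $a \in \suppp(\hat{y} - y^*)$ produces $b \in \suppm(\hat{y} - y^*)$ such that $\hat{y}' := \hat{y} - \chi_a + \chi_b$ is again a minimizer (the exchange inequality is forced to be an equality). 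Using the sum-zero identity $\|\hat{y} - x\|_1 = 2 \sum_c \max(\hat{y}(c) - x(c), 0)$ together with the distance hypothesis, one can locate $a \in \suppp(\hat{y} - y^*) \cap \suppp(\hat{y} - x)$; by arranging that $a \ne j$ and that the axiom-determined $b$ satisfies $b \ne i$, the new minimizer $\hat{y}'$ still obeys the cut-off constraints yet is strictly closer to $x$, contradicting the minimality of $\|\hat{y} - x\|_1$. Hence $\hat{y} \in M(x) \cap M'$.

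Part (ii) will follow easily from (i) via the distance identity: the equality $\mu(x') = \mu(x) - 2$ holds because any $x^* \in M' \cap M(x)$ attains $\|x^* - x'\|_1 = \mu(x) - 2$, and the equality $M(x') = M'$ amounts to the two inclusions $M' \subseteq M(x')$ (direct) and $M(x') \subseteq M'$ (any $x^* \in M(x')$ has $\|x^* - x'\|_1 = \mu(x) - 2$, which by the distance identity forces $x^*(i) \le x(i) - 1$, $x^*(j) \ge x(j) + 1$, and $\|x^* - x\|_1 = \mu(x)$). The main obstacle is the exchange argument in part (i): the combinatorial case analysis ensuring that we can choose $a$ and then obtain $b$ satisfying $a \ne j$, $b \ne i$, and strict distance decrease simultaneously. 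The delicate cases are $\hat{y}(j) = x(j) + 1$ (where $a = j$ would destroy the $j$-constraint after decrement) and $\hat{y}(i) = x(i) - 1$ (where $b = i$ would destroy the $i$-constraint after increment); handling these may require iterating the exchange or appealing to the symmetric form of (M-EXC) with the roles of $\hat{y}$ and $y^*$ reversed.
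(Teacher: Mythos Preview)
The paper does not supply its own proof of this statement; it appears in the Appendix purely as a cited result from \cite{Shi22L1} and \cite{MS21steepM}, recalled for comparison with the \Mnat-convex analogues treated in the body of the paper. There is therefore no argument in the paper against which to compare your proposal.

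On the proposal itself: the distance identity is correct, and the derivation of part (ii) from part (i) via that identity is sound and standard. Your outline for part (i), however, stops precisely at the hard step, and the obstacle you flag is genuine rather than cosmetic. A single application of (M-EXC) to the pair $(\hat{y}, y^*)$ of minimizers does not in general yield an exchange pair $(a,b)$ satisfying all of $a \in \suppp(\hat{y}-x)$, $b \in \suppm(\hat{y}-x)$, $a \ne j$, $b \ne i$ simultaneously; in the boundary cases $\hat{y}(j) = x(j)+1$ or $\hat{y}(i) = x(i)-1$ the axiom gives you no control over which coordinate appears as $b$ once $a$ is fixed, and reversing the roles of $\hat{y}$ and $y^*$ has the same defect on the other side. ``Iterating the exchange'' is not obviously well-founded without an additional monovariant beyond $\|\hat{y} - x\|_1$, since a step that preserves one constraint may undo progress on the other. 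The arguments in the cited references are organized differently---they apply (M-EXC) to the pair $(x', x^*)$ with $x' = x - \chi_i + \chi_j$ and exploit the steepest-descent optimality of $(i,j)$ to force equality in the exchange inequality, thereby adjusting $x^*$ within $M(x)$ one constraint at a time while controlling the $L_1$-distance---and closing your boundary cases would essentially require reconstructing that machinery.
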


\noindent
The same statement holds also for s.s.\;quasi M-convex functions;
see Section \ref{sec:geodesic-quasiM-proof} for a proof.

\paragraph{Proximity Property}

\begin{theorem}[{\cite[Theorem 3.4]{MMS02}, \cite[Theorem 4.4]{MS03quasi}}]
\label{thm:proxmity-M}
 Let $f: \Z^n \to \Rinf$ be an M-convex function 
and $\alpha\ge 2$ be an integer.
 For every vector ${x} \in \dom f$  satisfying
\begin{equation*}
  f({x})  \le  \min_{i,j \in N}f({x} - \alpha (\chi_i -  \chi_j)),
\end{equation*} 
 there exists some minimizer $x^*$ of $f$ such that
$\|x^* - {x} \|_\infty \le (n-1) (\alpha -1)$.
\end{theorem}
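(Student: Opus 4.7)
The plan is to argue by contradiction: assume every minimizer $x^*$ of $f$ satisfies $\|x^* - x\|_\infty \ge (n-1)(\alpha-1)+1$, and pick $y^* \in \arg\min f$ minimising $\|y^* - x\|_1$ (with, say, lexicographic tie-breaking). The goal is to produce $i, j \in N$ with $f(x - \alpha(\chi_i - \chi_j)) < f(x)$, contradicting the hypothesis.

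I would first use a pigeonhole step to locate a candidate pair $(k, j)$. Since $f$ is M-convex, $\dom f$ lies on a hyperplane $z(N) = r$, so $x(N) = y^*(N)$ and the positive and negative parts of $y^* - x$ balance. The assumption provides some coordinate $k$ with $|y^*(k) - x(k)| \ge (n-1)(\alpha-1)+1$, which WLOG we take to satisfy $y^*(k) > x(k)$. Then
\[
 \sum_{i \in N:\, x(i) > y^*(i)} (x(i) - y^*(i)) \;\ge\; y^*(k) - x(k) \;\ge\; (n-1)(\alpha-1) + 1,
\]
and this sum ranges over at most $n-1$ indices (since $k$ is excluded), so pigeonhole supplies some $j \ne k$ with $x(j) - y^*(j) \ge \alpha$.

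Next, I would apply (M-EXC) to $(y^*, x)$ at $k \in \suppp(y^* - x)$: it returns some $j_1 \in \suppp(x - y^*)$ with
\[
 f(y^*) + f(x) \;\ge\; f(y^* - \chi_k + \chi_{j_1}) + f(x + \chi_k - \chi_{j_1}).
\]
Optimality of $y^*$ gives $f(y^* - \chi_k + \chi_{j_1}) \ge f(y^*)$, and the $L_1$-minimality of $y^*$ rules out equality (otherwise $y^* - \chi_k + \chi_{j_1}$ would be a strictly closer minimiser of $f$), so the strict inequality $f(x + \chi_k - \chi_{j_1}) < f(x)$ follows. I would iterate this construction $\alpha$ times, keeping $y^*$ fixed and updating only $x$, producing a chain $x = x_0, x_1, \ldots, x_\alpha$ with $f(x_\alpha) < f(x)$ and $x_\alpha - x = \alpha\chi_k - \sum_{t=1}^{\alpha} \chi_{j_t}$; feasibility of all $\alpha$ iterations is guaranteed by the slack $y^*(k) - x(k) \ge (n-1)(\alpha-1)+1$ and $x(j) - y^*(j) \ge \alpha$ secured in the pigeonhole step.

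The final stage is to convert this ``distributed'' strict decrease into a ``concentrated'' one of the form $f(x + \alpha(\chi_k - \chi_j)) < f(x)$, which would directly contradict $f(x) \le f(x - \alpha(\chi_j - \chi_k))$. The main obstacle is that (M-EXC) only guarantees existence of \emph{some} pivot at each step, so the $j_t$'s need not all coincide with our chosen $j$. To force concentration, I would pursue one of two routes: (a) adopt a secondary extremal property for $y^*$ (e.g.\ among $L_1$-minimisers, further maximise $x(j) - y^*(j)$, so that any iteration pivoting off $j$ destroys extremality), or (b) invoke a standard M-convex aggregation/path-exchange lemma, which says that among all balanced moves $(+\alpha\chi_k, -c)$ with $c \ge \mathbf{0}$ supported on $\suppp(x - y^*)$ and $\sum_i c(i) = \alpha$, the minimum of $f(x + \alpha\chi_k - c)$ is attained on an extreme point $c = \alpha\chi_{j^*}$ for some $j^* \in \suppp(x - y^*)$. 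Either route yields the desired contradiction and completes the proof.
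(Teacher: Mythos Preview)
The paper does not prove this theorem; it is merely \emph{recalled} in the Appendix (alongside Theorems~\ref{thm:Mminimizer-M}, \ref{thm:min-cut-M}, \ref{thm:geodesic-M}) with citations to \cite{MMS02,MS03quasi}, so there is no in-paper argument to compare against. I can, however, assess the proposal on its own merits.

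Your setup---the $L_1$-closest minimiser $y^*$, the pigeonhole step locating $k$ and some $j$ with $x(j)-y^*(j)\ge\alpha$, and the iterated application of (M-EXC) producing a strict-descent chain $x=x_0,x_1,\ldots,x_\alpha$---is sound. In particular, the argument that equality $f(y^*-\chi_k+\chi_{j_t})=f(y^*)$ would yield a minimiser strictly closer to $x_{t-1}$ is correct, and a triangle-inequality check shows $y^*$ remains $L_1$-closest to each $x_t$, so the iteration is legitimate.

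The genuine gap is exactly where you flag it: the concentration step. Neither route is carried through, and both are problematic as stated. Route~(a) is circular: the index $j$ is produced by pigeonhole \emph{after} $y^*$ has been fixed, so you cannot use $j$ as a secondary selection criterion for $y^*$. Route~(b) appeals to an ``aggregation/path-exchange lemma'' asserting that $\min_c f(x+\alpha\chi_k-c)$ over the simplex $\{c\ge 0,\ c(N)=\alpha,\ \supp(c)\subseteq\suppp(x-y^*)\}$ is attained at a vertex $\alpha\chi_{j^*}$. This is not a standard result, and indeed runs against intuition: M-convexity is a \emph{convexity}-type condition, so there is no reason for minima over a simplex to sit at vertices (that would be concave behaviour). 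The function $c\mapsto f(x+\alpha\chi_k-c)$ is itself M-convex, and M-convex functions certainly need not be minimised at simplex vertices. The concentration step is the technical heart of the proximity theorem, and the published proofs handle it by a substantially more delicate inductive exchange argument rather than a one-shot polyhedral claim; you have correctly identified where the difficulty lies, but have not resolved it.
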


\noindent
The same statement holds also for s.s.\;quasi M-convex functions
\cite[Theorem 4.4]{MS03quasi}.

\subsection{Proof of Geodesic Property
for Quasi M-convex Functions}
\label{sec:geodesic-quasiM-proof}

 We give a proof for the following geodesic property
for semi-strictly quasi M-convex functions.

\begin{theorem}
\label{thm:geodesic-quasiM}
 Let $f: \Z^n \to \Rinf$ be a function with $\arg\min f \ne \emptyset$
satisfying {\SSQM} (i.e., $f$ is semi-strictly quasi M-convex), 
and $x \in \dom f$ be a vector that is not a minimizer of $f$.
 Also, let $(i,j)$ be a pair of distinct elements in $N$
 minimizing the value $f(x - \chi_i + \chi_j)$,
 and define 
 \begin{equation*}
 {M}'=
 \{x^* \in M(x)
 \mid x^*(i) \le x(i)-1,\ x^*(j) \ge x(j)+1  \}.
 \end{equation*}
 {\rm (i)}
 There exists some $x^* \in M(x)$ that is contained in ${M}'$;
 we have ${M}' \ne \emptyset$, in particular.
 \\
 {\rm (ii)}
 It holds that
$\mu(x - \chi_i + \chi_j) = 
 \mu(x)-2$
and $M(x - \chi_i + \chi_j) ={M}'$.
\end{theorem}

 The proof given below is essentially the  same as the one
in \cite{MS21steepM} 
for Theorem \ref{thm:geodesic-M} on M-convex functions.
 In the proof of Theorem \ref{thm:geodesic-quasiM} (i)
we use the following lemma.

\begin{lemma}
\label{le:mc_add_near_opt}
 Let $f: \Z^n \to \Rinf$ be a function with $\arg\min f \ne \emptyset$
satisfying {\SSQM}.
 Assume that  $x \in \dom f$ is not a minimizer of $f$,
and let $(i, j)$ be a pair of distinct elements in $N$
minimizing the value $f(x - \chi_i + \chi_j)$.
 Define $y = x - \chi_i + \chi_j$.
\\
{\rm (i)} For every $x^* \in \arg\min f$ with $i \in \suppp(x^* - y)$,
there exists some $h \in \suppm(x^* - y)$ with $h \ne j$
such that
$x^* - \chi_i + \chi_h \in \arg\min f$.\\
{\rm (ii)} For every $x^* \in \arg\min f$ with $j \in \suppm(x^* - y)$,
there exists some $k \in \suppp(x^* - y)$ with $k \ne i$
such that
$x^* + \chi_j - \chi_k \in \arg\min f$.
\end{lemma}
\begin{proof}
We prove (i) only since (ii) can be proven similarly.
 We first note that $f(y) < f(x)$  holds
since $x$ is not a minimizer \cite[Theorem 4.2]{MS03quasi}.
 By definition, $f$ satisfies the condition (SSQM),
which, applied to $x^*, y$ and $i \in \suppp(x^* - y)$,
implies that there exists some $h \in \suppm(x^* - y)$ such that
at least one of the following conditions holds:
\begin{align}
& \label{eq:mc_add_near_opt_2-1}
f(x^* - \chi_i + \chi_h)<  f(x^*),
\\
& \label{eq:mc_add_near_opt_2-2}
f(y + \chi_i - \chi_h) < f(y),
\\
& \label{eq:mc_add_near_opt_2-3}
f(x^* - \chi_i + \chi_h)=  f(x^*)
\mbox{ and }
f(y + \chi_i - \chi_h) = f(y).
\end{align}
 Note that $h \neq i$ holds.
 We have 
\begin{equation}
\label{eq:mc_add_near_opt_3}
 f(x^* - \chi_i + \chi_h) \ge     f(x^*)
\end{equation}
since $x^* \in \arg \min f$.
 By the choice of $i, j \in N$, we have
\begin{equation}
\label{eq:mc_add_near_opt_4}
f(y + \chi_i - \chi_h) \ge    f(y), 
\end{equation}
where $y + \chi_i - \chi_h = x - \chi_h + \chi_j$.
 The inequalities  \eqref{eq:mc_add_near_opt_3} and \eqref{eq:mc_add_near_opt_4} 
exclude the possibility of
\eqref{eq:mc_add_near_opt_2-1} and \eqref{eq:mc_add_near_opt_2-2},
respectively.
 Therefore, we have \eqref{eq:mc_add_near_opt_2-3}.
 The former equation in \eqref{eq:mc_add_near_opt_2-3}
implies that $x^* - \chi_i + \chi_h$
is also a minimizer of $f$.
 We also have $h \ne j$,
which follows from the latter equation in \eqref{eq:mc_add_near_opt_2-3}
and the inequality $f(y) < f(x)$
since $y + \chi_i - \chi_h=x - \chi_h + \chi_j$.
\end{proof}

\begin{proof}[Proof of Theorem \ref{thm:geodesic-quasiM} (i)]
 Putting $y = x - \chi_i + \chi_j$,  we have $f(y)< f(x)$.
 We first show that there exists some $y^* \in M(x)$ such that $y^*(i) \leq x(i) -1$.
Assume, to the contrary, that $y^*(i) > x(i) - 1 =y(i)$ holds 
for every $y^* \in M(x)$.
Let $y^* \in M(x)$ be a vector minimizing the value $y^*(i)$ among all such vectors.
 By Lemma \ref{le:mc_add_near_opt}~(i), there exists 
some $h \in \suppm(y^* - y)$ with $h \ne j$ such that
$y^* - \chi_i + \chi_h \in \arg\min f$.
Since $y^*(h) < y(h) = x(h)$, we have
$\norm{(y^* - \chi_i + \chi_h) - x}_1 \le \norm{y^* - x}_1$.
 Since $y^* \in M(x)$ and $y^* - \chi_i + \chi_h \in \arg \min f$,
it follows that $y^* - \chi_i + \chi_h \in M(x)$.
This, however, is a contradiction to the choice of $y^*$
since $(y^* - \chi_i + \chi_h)(i) = y^*(i) - 1 < y^*(i)$.

We then show that
there exists some $x^* \in M(x)$ satisfying
both of  $x^*(i) \le x(i) -1$ and  $x^*(j) \ge x(j) + 1$.
Let $x^* \in M(x)$ be a vector with $x^*(i) \le x(i) - 1$. 
If there exists such $x^*$ with $x^*(j) \ge x(j) + 1$, then we are done.
Hence, we assume, to the contrary, that $x^*(j) < x(j) + 1 = y(j)$
for every $x^* \in M(x)$ with $x^*(i) \le x(i) - 1$, and suppose that $x^*$ maximizes the 
value $x^*(j)$ among all such $x^*$.
 By Lemma \ref{le:mc_add_near_opt} (ii), there exists 
some $k \in \suppp(x^* - y)$ with $k \ne i$
such that $x^* + \chi_j - \chi_k \in \arg\min f$.
 We show that $x^* + \chi_j - \chi_k \in M(x)$ holds.
Since $x^*(k) > y(k) = x(k)$, we have
$\norm{(x^* + \chi_j - \chi_k) - x}_1 \le \norm{x^* - x}_1$.
We also have $(x^* + \chi_j - \chi_k)(i) = x^*(i) \le x(i) - 1$ since $i \notin \{j, k\}$.
This, however, is a contradiction to the choice of $x^*$
since $(x^* + \chi_j - \chi_k)(j) = x^*(j) + 1 > x^*(j)$.
Hence, we have $x^*(j) \ge x(j) + 1$.

This concludes the proof of Theorem \ref{thm:geodesic-quasiM} (i).
\end{proof}

\begin{proof}[Proof of Theorem \ref{thm:geodesic-quasiM} (ii)]
 We first prove the equation $\mu (x - \chi_i + \chi_j)   = \mu(x) - 2$.
It holds that
\begin{align}
\forall y^* \in M(x - \chi_i + \chi_j): \quad
    \mu (x - \chi_i + \chi_j)
    & = \norm{y^* - (x - \chi_i + \chi_j)}_1 \nonumber \\
    &\ge \norm{y^* - x}_1 - \norm{\chi_i - \chi_j}_1 \nonumber \\
 &   = \norm{y^* - x}_1 - 2 \ge \mu(x) - 2,
     \label{eq:p2_2}
\end{align}
where the first inequality is by the triangle inequality.
  We also have
\begin{align}
\forall x^* \in M': \quad
      \mu (x - \chi_i + \chi_j) 
 \le \norm{x^* - (x - \chi_i + \chi_j)}_1
& =     \norm{x^* - x}_1 - 2 
= \mu(x) - 2,
     \label{eq:p2_1}
\end{align}
where the first equality is by the inequalities
$x^*(i) \leq x(i) - 1,\ x^*(j) \geq x(j) + 1$
and the second equality is by $x^* \in M(x)$.
 The equation $\mu (x - \chi_i + \chi_j)   = \mu(x) - 2$
 follows from \eqref{eq:p2_2} and \eqref{eq:p2_1}.

 We then prove the equation $M(x - \chi_i + \chi_j) ={M}'$.
 It follows  from $\mu (x - \chi_i + \chi_j)   = \mu(x) - 2$,
\eqref{eq:p2_2}, and \eqref{eq:p2_1} that 
all the inequalities in \eqref{eq:p2_2} and \eqref{eq:p2_1} hold
with equality; in particular, we have
\begin{align}
& 
\forall y^* \in M(x - \chi_i + \chi_j): \quad
\norm{y^* - (x - \chi_i + \chi_j)}_1 = \norm{y^* - x}_1 - 2 = \mu(x) - 2,
     \label{eq:claim23-2}
\\
&\forall x^* \in M': \quad
\mu (x - \chi_i + \chi_j)   =
 \norm{x^* - (x - \chi_i + \chi_j)}_1.
     \label{eq:claim23-3}
\end{align}
 The equation \eqref{eq:claim23-3} implies that
$M(x - \chi_i + \chi_j) \supseteq M'$.
 We can also obtain the reverse inclusion
$M(x - \chi_i + \chi_j) \subseteq M'$ from \eqref{eq:claim23-2}.
 Indeed, for every $y^* \in M(x - \chi_i + \chi_j)$,
we have $y^* \in M(x)$ by $\norm{y^* - x}_1 = \mu(x)$,
and we also have
 $y^*(i) \le x(i) - 1$ and $y^*(j) \ge x(j) + 1$
since $\norm{y^* - (x - \chi_i + \chi_j)}_1 = \norm{y^* - x}_1 - 2$.
 Hence, $M(x - \chi_i + \chi_j) \subseteq M'$ holds,
implying the equation $M(x - \chi_i + \chi_j) ={M}'$.
\end{proof}



\newpage

\tableofcontents

\end{document}